\documentclass[12pt]{article}
\usepackage[T1]{fontenc}
\usepackage{authblk} 
\usepackage{amsmath}
\usepackage{amsfonts}
\usepackage{amssymb}
\usepackage[normalem]{ulem} 
\usepackage[pagewise]{lineno}
\usepackage{amsmath,xcolor,ulem}
\usepackage{amsfonts}
\usepackage{amssymb}
\usepackage{amsthm}
\usepackage{graphicx}
\usepackage{marginnote}
\usepackage{subcaption}

\usepackage[colorlinks=true, allcolors=blue]{hyperref}
\usepackage[top=2.5cm,bottom=2.5cm,left=2.5cm,right=2.5cm]{geometry}
\usepackage{float}

\usepackage[algoruled]{algorithm2e}
\usepackage[font=footnotesize,width=\linewidth]{caption} 
\usepackage{graphicx}
\usepackage{amssymb}
\usepackage{amsthm}
\usepackage{amsmath}
\usepackage{lineno}
\usepackage{hyperref}
\usepackage{verbatim}

\newtheorem{theorem}{Theorem}[section]
\newtheorem{lemma}[theorem]{Lemma}
\theoremstyle{definition}

\theoremstyle{remark}
\newtheorem{remark}[theorem]{Remark}

\numberwithin{equation}{section}

\usepackage{graphicx, pdflscape}
\usepackage{amsmath, hyperref}
 \usepackage{amssymb} 
\usepackage{relsize, float, lineno}

 \usepackage{xcolor} 
\usepackage{subcaption}
\allowdisplaybreaks

\usepackage[figcolor=white]{todonotes}

\title{On a Generalized Compartment Model for\\ Ethanol Metabolism in the Human Body}
\author[1]{Manh Tuan Hoang\footnote{\href{mailto:tuanhm16@fe.edu.vn}{tuanhm16@fe.edu.vn(corresponding author)}}}
\author[2]{Thi Kim Quy Ngo\footnote{ \href{mailto:quyntk@ptit.edu.vn}{quyntk@ptit.edu.vn}}}
\author[3]{Benjamin Wacker\footnote{ \href{mailto:benjamin.wacker@hs-merseburg.de}{benjamin.wacker@hs-merseburg.de}}}
\affil[1]{Department of Mathematics, FPT University, Hoa Lac Hi-Tech Park, Km29 Thang Long Blvd, Hanoi, Viet Nam}
\affil[2]{Posts and Telecommunications Institute of Technology, Km10 Nguyen Trai, Ha Dong, Hanoi, Viet Nam}
\affil[3]{Department of Engineering and Natural Sciences, University of Applied Sciences Merseburg, Eberhard-Leibnitz-Str. 2, D-06217 Merseburg, Germany}
\begin{document}
\maketitle
\begin{abstract}
We introduce a generalized continuous-time compartment model of ethanol metabolism in the human body that extends a recently developed framework. In the proposed model, we replace the Michaelis-Menten mechanism of the liver's ethanol metabolism rate with a general class of nonlinear rate functions. This modification provides greater modeling flexibility and enables the model to capture a wider range of hepatic ethanol metabolism dynamics. The qualitative behavior of the proposed ethanol metabolism model is analyzed rigorously. More specifically, we investigate the positivity and boundedness of solutions, as well as the global asymptotic stability (GAS) of the unique equilibrium point using an appropriate quadratic Lyapunov function.

Second, we formulate a discrete-time counterpart of the proposed continuous-time model and investigate its dynamical properties. We show that, under an appropriate condition on the time step size, the discrete-time model faithfully reproduces the qualitative dynamical behavior of the corresponding continuous-time system.

Lastly, we conduct a series of numerical experiments employing several ethanol metabolism rate functions to support the theoretical results.
\end{abstract}
\begin{minipage}{0.9\linewidth}
 \footnotesize
\textbf{AMS classification:} 34C60, 39A60 \\
\textbf{Keywords:} 
Ethanol metabolism, Compartment modeling, Nonlinear rate, Dynamical analysis, Global asymptotic stability
\end{minipage}

\section{Introduction}\label{intro}
In very recent works \cite{Wacker1, Wacker2}, Wacker conducted rigorous and extensive studies on mathematical modeling, analysis and numerical simulation of a new ethanol metabolism in the human body. Specially, the following compartment model was proposed in \cite{Wacker1} to describe the ethanol metabolism:
\begin{equation}\label{eq:1}
\begin{split}
A'(t) &= -a A(t), \quad A(0) > 0\\
B'(t) &= b C(t) - b B(t) + aA(t),\quad B(0) \geq 0\\
C'(t) &= b B(t)- bC(t)- c\dfrac{C(t)}{d+C(t)}, \quad C(0) \geq 0,
\end{split}
\end{equation}
where
\begin{itemize}
\item  $A(t), B(t)$ and $C(t)$ denote for the amount of ethanol in the gastrointestinal route, the blood system and liver at the time $t$, respectively;
\item $a$ is the constant elimination rate of ethanol from the gastrointestinal route;
\item $b$ is the constant elimination rate of ethanol from the blood system;
\item $c$ is the constant elimination rate of ethanol from the live;
\item $d$ is the constant rate of the alcohol Michaelis-Menten mechanism.
\end{itemize}
A schematic representation of ethanol transport within the human body, modeling by the three-compartment model \eqref{eq:1}, is described in Figure \ref{Fig:1new}. We refer the readers to \cite{Wacker1} for more details of this model. 
\begin{figure}[H]
\centering
\includegraphics[height=6cm,width=16cm]{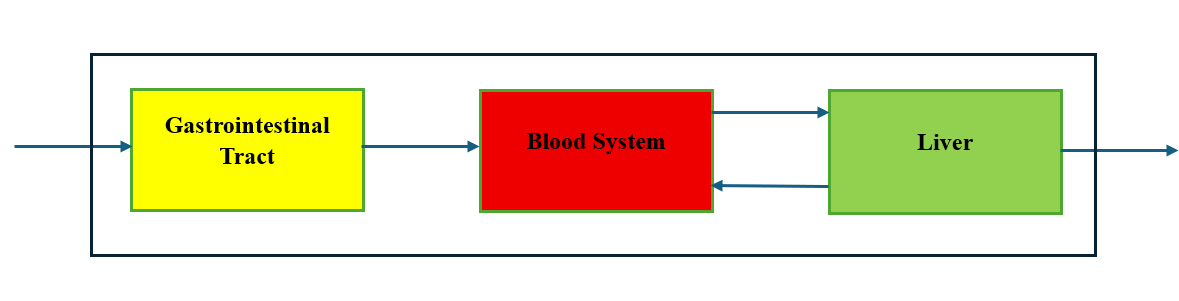}
\caption{The flow of ethanol through the human body.}\label{Fig:1new}
\end{figure}

It is worth noting that \eqref{eq:1} combines two different ethanol metabolism models, which were proposed by Levitt and Levitt in \cite{Levitt} and by Ludwin in \cite{Ludwin}. While Ludwig considers the gastrointestinal tract and the blood system as the model compartments \cite{Ludwin}, Levitt and Levitt use the blood system and the liver, which are considered as the primary sites of ethanol elimination \cite{Levitt}. In contrast, \eqref{eq:1} uses the three compartments to model the flow of ethanol through the human body, consisting of the gastrointestinal tract, the blood system, and the liver, in which ethanol is primarily metabolized by alcohol dehydrogenase \cite{Wacker1}.

In \cite{Wacker1}, a rigorous mathematical study of \eqref{eq:1} was carried out. More clearly, Wacker established the existence and uniqueness of solutions, proved their positivity and boundedness, determined a unique equilibrium state and examined its local asymptotic stability, and constructed a nonstandard finite difference (NSFD) scheme based on Mickens' methodology \cite{Mickens1, Mickens2, Mickens3, Mickens4, Mickens5}.

In \cite{Wacker2}, the global asymptotic stability (GAS) of the equilibrium state of \eqref{eq:1} and two NSFD schemes were investigated. The results demonstrated the validity of the theoretical findings as well as the superiority of the NSFD schemes in long-time numerical simulations \cite{Wacker1, Wacker2}.

We now turn our attention to the function 
\begin{equation}\label{eq:2}
f(C) := c\dfrac{C}{d + C},
\end{equation}
which was used in \eqref{eq:1} to describe the liver's ethanol metabolism rate associated with the Michaelis-Menten mechanism. This function captures the saturation effect \cite{Capasso}, which was commonly used in epidemiological models (see, for instance, \cite{Capasso, Cui, Xu, Zhang} and references therein). Besides the monotone  form \eqref{eq:2}, nonmonotonic functions constitute another suitable choice for $f$ \cite{Ruan, Xiao}. Specifically, $f$ attains its maximum value at a threshold $C= C^*$ and then gradually decreases as $C$ increases further. It can be interpreted as a psychological (inhibitory, overload) effect \cite{Ruan, Xiao}. This effect can be represented by the function \cite{Xiao}
\begin{equation}\label{eq:3}
f(x) = \dfrac{\kappa_1 x}{1 + \kappa_2 x^2}, \quad \kappa_1, \kappa_2 > 0.
\end{equation}
This function attains a maximum value at $x^* = (\sqrt{\kappa_2})^{-1}$. Furthermore, it is increasing when $x < x^*$ and decreasing when $x > x^*$. The graphs of functions $f$ given in \eqref{eq:2} and \eqref{eq:3} are depicted in Figure \ref{Fig:1}.
\begin{figure}[H]
\subfloat[Saturated effect]{%
\includegraphics[height=7.0cm,width=8cm]{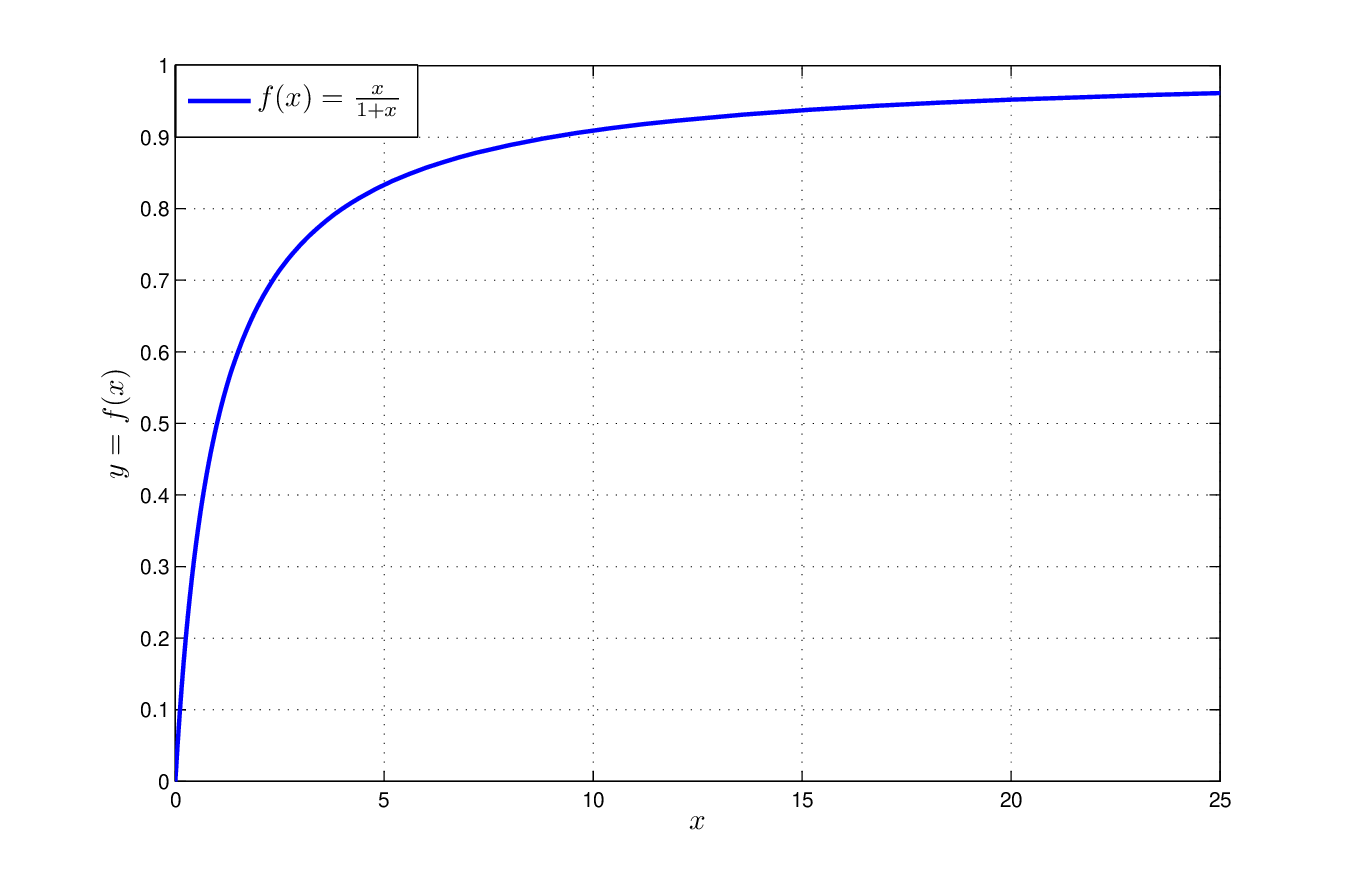}
\label{Figure:1b}
}\hfill
\subfloat[Psychological (inhibitory, overload) effect]{%
\includegraphics[height=7.0cm,width=8cm]{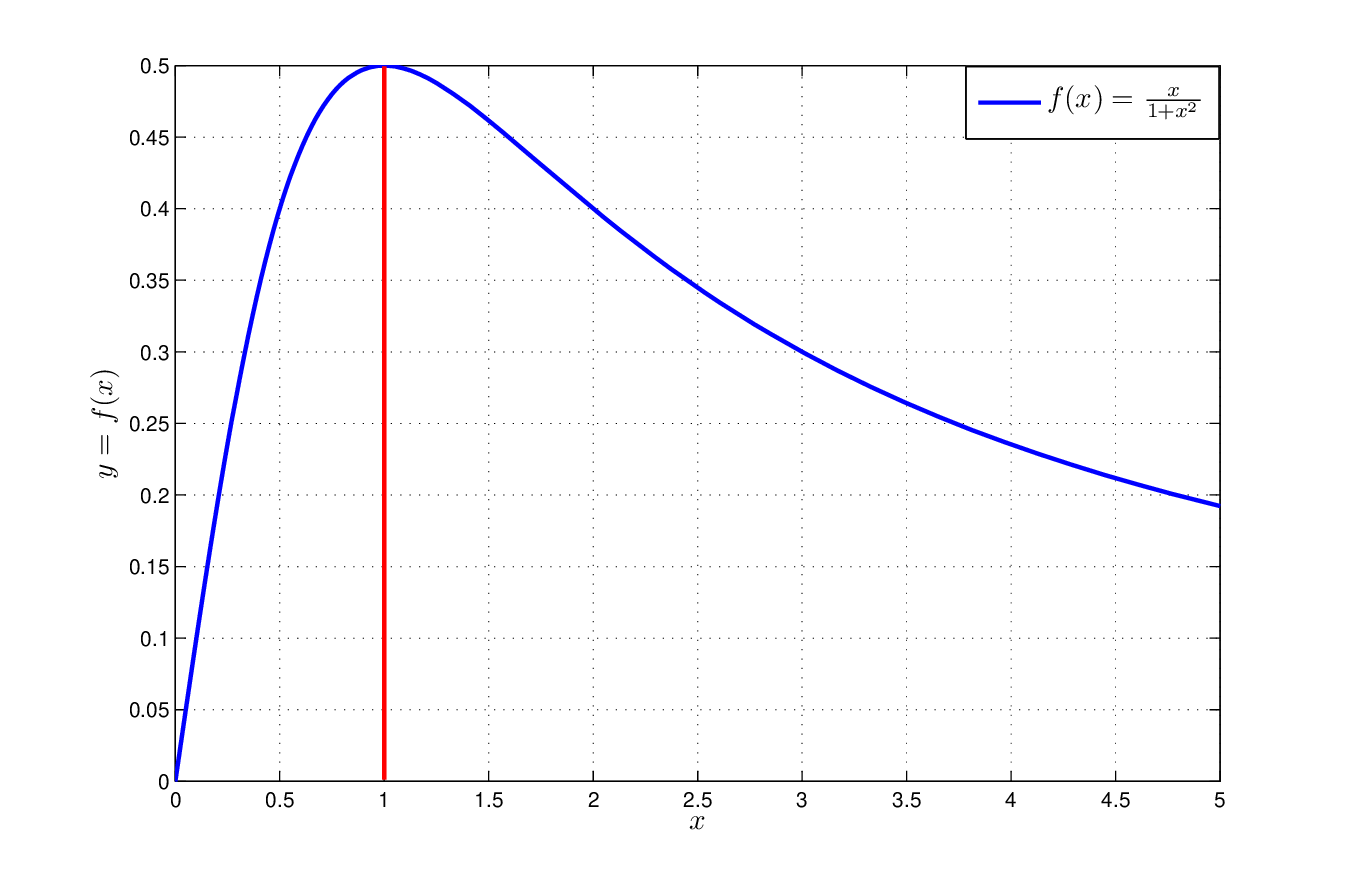}
\label{Figure:1c}
}\hfill
\caption{Monotone and nonmonotone rate functions.}\label{Fig:1}
\end{figure}
Another important class of functional responses in biology is the family of Holling type I, II, III, and IV functions \cite{Dawes1,Wu1}:
\begin{equation*}
\begin{split}
&f_{H, 1} \left( C \right) =  k_{1} \cdot C, \\
&f_{H, 2} \left( C \right) = \dfrac{k_{2, 1} \cdot C}{1 + k_{2, 2} \cdot C}, \\
&f_{H, 3} \left( C \right) = \dfrac{k_{3, 1} \cdot C^{m}}{1 + k_{3, 2} \cdot C^{m}}, \\
&f_{H, 4} \left( C \right) = \dfrac{k_{4, 1} \cdot C^{n}}{1 + k_{4,2} \cdot C^{n + 1}},
\end{split}
\end{equation*}
where $k_{1}, k_{2, 1}, k_{2, 2}, k_{3, 1}, k_{3, 2}, k_{4, 1}, k_{4, 2}$ are positive constants and  $m > 1$, $n \geq 1$. Some classes of general functional responses can be found in \cite{Kalinkat1}.

Inspired by the mentioned-above studies using nonlinear functional rates,  we consider \eqref{eq:1} in a more general framework. Specifically, we propose an extended version represented by

\begin{equation}\label{eq:1new}
\begin{split}
A'(t) &= -a A(t), \quad A(0) > 0,\\
B'(t) &= b C(t) - b B(t) + aA(t),\quad B(0) \geq 0,\\
C'(t) &= b B(t)- b C(t)- f(C(t)), \quad C(0) \geq 0,
\end{split}
\end{equation}
where, without loss of generality, $f$ is assumed to satisfy the following property:\\
\textbf{(P)}: $f(x)$ is continuous function on $[0, \,\,\,\infty)$; $f(x) \geq 0$ for $x \geq 0$ and $f(x) = 0$ if and only if $x = 0$.

In practice, since the ethanol metabolization in the liver depends on a variety of physiological factors, including the availability and activity of alcohol dehydrogenase \cite{Edenberg1}, the liver has a finite capacity to metabolize ethanol. Therefore, it is reasonable to assume that $f$ is bounded above by a positive constant. It is worth noting that the function with the property \textbf{(P)} can be considered as the most general form of the metabolization rate functions. Consequently, the resulting model \eqref{eq:1new} becomes significantly more flexible and general, allowing it to capture a wider range of realistic scenarios by treating $f$ as a control parameter.

In this work, we rigorously analyze the dynamics of the proposed ethanol metabolism model \eqref{eq:1new}. More precisely, we  investigate the positivity and boundedness of the solutions, and global asymptotic stability (GAS) of a unique equilibrium point by an appropriate quadratic Lyapunov function. On the other hand, we conduct a set of numerical experiments, in which several  ethanol metabolism rate functions are used, to support the theoretical findings. The numerical results provide strong evidence supporting the theoretical findings.

{
It is well known that discrete-time dynamical systems have found numerous applications in both theory and practice (see, e.g., \cite{Allen, Allen1, Elaydi, Ladino, Li, May, Wang}). Particularly, discrete-time models possess several advantages over their continuous-time counterparts since in many real-world situations, data are collected at discrete observation times (e.g., daily or hourly), making discrete-time models more suitable for data acquisition and analysis. As pointed out by May in his influential work \cite{May}, discrete-time models arising in real-world applications, although simple and deterministic, may exhibit highly complex and even unpredictable dynamical behaviors that are absent in their continuous-time counterparts. Motivated by both the practical importance and the rich dynamical features of discrete-time systems, we consider a discrete-time version of \eqref{eq:1new} in the form:
\begin{equation}\label{eq:DE1}
\begin{split}
A_{n + 1} &= A_n  -a\Delta t A_n, \quad A_0 > 0,\\
B_{n+1} &= B_n + b\Delta t C_n - b\Delta t B_n + a\Delta t A_n,\quad B_0 \geq 0,\\
C_{n + 1} &= C_n +  b\Delta t B_n- b\Delta t C_n- \Delta tf(C_n), \quad C_0 \geq 0,
\end{split}
\end{equation}
where $n$ represents the time $n\Delta t$ with $\Delta t$ being the step size. Thus, $A_n$, $B_n$ and $C_n$ correspond to the amount of the ethanol in the gastrointestinal route, the blood system and liver at the time $n\Delta t$, respectively.

In Section \ref{Sec2new}, we analyze the dynamics of the discrete-time model \eqref{eq:DE1}. Based on rigorous mathematical analysis, we establish the positivity of the solutions, determine the set of equilibrium points and establish their asymptotic stability properties. The obtained results show that, under a simple condition on the time step size $\Delta t$, the discrete-time model faithfully reproduces the dynamical behavior of its continuous-time counterpart. This property is particularly desirable in practice.
}

This work is organized as follows:\\
Section \ref{Sec2} and \ref{Sec2new} investigate dynamical properties of the proposed models \eqref{eq:1new} and \eqref{eq:DE1}. Numerical experiments are conducted and reported in Section \ref{Sec3}. The last section presents some concluding remarks and discussions.
\section{Dynamical Analysis of the Generalized Ethanol Metabolism Model}\label{Sec2}
This section analyzes the global dynamics of the generalized model \eqref{eq:1new}. First, the existence and uniqueness of solutions can be done by the arguments performed in \cite{Wacker1}. Here, we establish the positivity of the solutions based on a simpler approach than that employed in \cite{Wacker1}.
\begin{lemma}\label{Lemma1}
The model \eqref{eq:1new} admits the following set as a positively invariant set
\begin{equation}\label{eq:5}
\Omega = \big\{(A,\,B,\,C) \in \mathbb{R}^3|A,\, B,\, C \geq 0\big\}.
\end{equation}
Furthermore, the functions $A(t)$ and $S(t) = A(t) + B(t) + C(t)$ are nonincreasing function for $t \geq 0$.
\end{lemma}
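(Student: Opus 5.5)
We take as given, following the remark before the lemma, that \eqref{eq:1new} has a unique solution on its maximal interval of existence. The first equation decouples and integrates explicitly to $A(t)=A(0)e^{-at}$. Since $A(0)>0$, this gives $A(t)>0$ for all $t\ge 0$, and $A'(t)=-aA(t)<0$, so $A$ is (strictly) nonincreasing. It remains to show that $B$ and $C$ cannot leave $[0,\infty)$.

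For $B$ and $C$ we use a first-exit-time argument. It is cleanest to first treat strictly positive data, or to perturb, and then pass to the limit using continuous dependence on initial data. Suppose some solution starting in $\Omega$ leaves $\Omega$. Let $t^*=\inf\{t\ge 0:\ B(t)<0 \text{ or } C(t)<0\}$. Then on $[0,t^*]$ we have $A,B,C\ge 0$, and at $t^*$ one of $B,C$ vanishes. We then check the sign of each vector-field component on the corresponding boundary face.

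\textbf{Face $B=0$.} Here $B'=bC+aA\ge 0$. In fact $B'>0$ because $A>0$. So $B$ cannot cross zero downward.

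\textbf{Face $C=0$.} By \textbf{(P)}, $f(0)=0$, so $C'=bB-f(0)=bB\ge 0$.

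The main obstacle is the degenerate case where $C=0$ and $B=0$ simultaneously, so that $C'=0$ at that instant. This is handled as follows.
\begin{enumerate}
\item Since $B'>0$ there, $B$ becomes positive immediately after $t^*$.
\item To make this rigorous without differentiability of $f$, rewrite the $C$-equation as $C'=bB-g(t)$, where $g(t)=bC(t)+f(C(t))$ is continuous.
\item Alternatively, and more simply, invoke the standard quasi-positivity criterion for invariance of the nonnegative orthant: for a locally Lipschitz vector field $F$, $\Omega$ is invariant if $F_i(x)\ge 0$ whenever $x\in\Omega$ and $x_i=0$. The two face checks above verify exactly this.
\item If $f$ is merely continuous, so that uniqueness is assumed from \cite{Wacker1}, apply the criterion to the modified system with $f$ replaced by $f(\max\{C,0\})$ and approximate. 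I expect this technical point, not the algebra, to be the only delicate step.
\end{enumerate}

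Finally, summing the three equations makes the exchange terms cancel:
\[
S'(t)=A'+B'+C'=-f(C(t)).
\]
Since $C(t)\ge 0$ by invariance, \textbf{(P)} gives $f(C(t))\ge 0$. Hence $S'(t)\le 0$ and $S$ is nonincreasing.

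As a consequence, $0\le A,B,C\le S(0)$, so the solution is bounded. Boundedness rules out finite-time blow-up, so the solution exists for all $t\ge0$ and all statements above hold globally.
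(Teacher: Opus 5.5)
Your proposal is correct and follows essentially the same route as the paper. The paper checks the same face conditions ($A'|_{A=0}=0$, $B'|_{B=0}=bC+aA\ge 0$, $C'|_{C=0}=bB\ge 0$), invokes the quasi-positivity criterion from Smith (Proposition B.7), and sums the equations to get $S'=-f(C)\le 0$. Your extra remarks are sound refinements that the paper leaves implicit: the degenerate corner $B=C=0$, extending $f$ by $f(\max\{C,0\})$ when it is only continuous, and boundedness ruling out blow-up.
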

\begin{proof}
First, it follows from \eqref{eq:1new} that
\begin{equation*}
\begin{split}
&A'|_{A = 0} = 0,\\
&B'|_{B = 0} = b C + a A \geq 0,\\
&C'|_{C = 0} = b B \geq 0,
\end{split}
\end{equation*}
for all $A, B, C \geq 0$. Therefore, we deduce from \cite[Proposition B.7]{Smith} that $A(t), B(t), C(t) \ge0$ for $t>0$, whenever $A(0), B(0), C(0) \ge0$.

From the first equation of \eqref{eq:1new}, we have $A(t)$ is a nonincreasing function.

On the other hand, it follows from the all equations of \eqref{eq:1new} that
\begin{equation*}
A'(t) + B'(t) + C'(t) = -f(C(t)) \leq 0,
\end{equation*}
which means that $A(t) + B(t) + C(t)$ is nonincreasing function for $t > 0$.

In conclusion, $\Omega$ forms a positively invariant set of \eqref{eq:1new}. The proof is complete.
\end{proof}

Next, we determine possible equilibrium points of \eqref{eq:1new}. Any equilibrium point is a solution to the system
\begin{equation}\label{eq:EQ1}
\begin{split}
0&= -a A,\\
0&= b C - b B + aA,\\
0&= b B- b C- f(C).
\end{split}
\end{equation}
From the property \textbf{(P)}, we conclude that \eqref{eq:1new} has only a trivial equilibrium point $E^* = (0,\,0,\,0)$. Its global asymptotic stability is established as follows.
\begin{theorem}\label{Theorem1}
The unique trivial equilibrium point $E^*$ is \eqref{eq:1new} is globally asymptotically stable.
\end{theorem}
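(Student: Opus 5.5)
The plan is to prove global asymptotic stability of $E^*$ on $\Omega$ in two parts: Lyapunov stability and global attractivity. The tool is a quadratic Lyapunov function combined with LaSalle's invariance principle. Lemma~\ref{Lemma1} guarantees that solutions remain in $\Omega$ and are bounded, since $0\le A,B,C\le S(t)\le S(0)$. Hence every orbit is precompact and LaSalle applies.

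\textbf{Step 1: the Lyapunov function.} A natural candidate is
\[
V(A,B,C) = \alpha A^2 + \tfrac12 (A+B+C)^2 + \tfrac12 (B-C)^2 ,
\]
with $\alpha>0$ to be chosen. Along solutions, $(A+B+C)' = -f(C)$ and $(B-C)' = aA - 2b(B-C) + f(C)$. A direct computation gives
\[
V' = -2a\alpha A^2 - (A+B+C)f(C) + (B-C)\bigl(aA - 2b(B-C) + f(C)\bigr).
\]
The terms containing $f$ combine to $-(A+2C)f(C)\le 0$, because $f\ge0$ and $A,C\ge0$. The remaining terms form the quadratic form $-2a\alpha A^2 + aA(B-C) - 2b(B-C)^2$ in the variables $A$ and $B-C$. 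This form is negative definite once $\alpha > a/(16b)$, by Young's inequality $aA(B-C)\le 2b(B-C)^2 + \tfrac{a^2}{8b}A^2$. Therefore
\[
V' \le -\varepsilon\bigl(A^2 + (B-C)^2\bigr) - (A+2C)f(C) \le 0 \quad \text{on } \Omega
\]
for some $\varepsilon>0$. Moreover, $V$ is positive definite and radially unbounded on $\Omega$. This already yields Lyapunov stability of $E^*$.

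\textbf{Step 2: attractivity via LaSalle.} Let $\mathcal{M}$ be the largest invariant subset of $\{V'=0\}\cap\Omega$. On $\{V'=0\}$ we have $A=0$, $B=C$, and $Cf(C)=0$. By property \textbf{(P)}, $Cf(C)=0$ forces $C=0$, hence $B=0$, so $\mathcal{M}=\{E^*\}$. LaSalle's invariance principle, applicable because orbits are bounded by Lemma~\ref{Lemma1}, then gives $(A,B,C)(t)\to E^*$ for every initial condition in $\Omega$. Combined with Step 1, this establishes global asymptotic stability.

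\textbf{Main obstacle.} The only real difficulty is finding a quadratic $V$ for which the cross terms containing $f(C)$, whose sign is otherwise uncontrolled, are nonpositive. The function $f$ is merely continuous and possibly nonmonotone, so these terms cannot be bounded using any growth assumption. The combination above works because the coefficients of $f$ coming from the $S^2$ term and the $(B-C)^2$ term merge into the sign-definite expression $-(A+2C)f(C)$. If that bookkeeping failed, I would fall back on a more elementary route. Since $S$ is nonincreasing and bounded below, it converges, so $\int_0^\infty f(C)\,dt<\infty$. Because $A\to0$ explicitly and $B-C$ satisfies a linear equation with decaying forcing, a Barbalat-type argument would then give $f(C(t))\to0$. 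Property \textbf{(P)}, together with the boundedness of $C$ and continuity of $f$, would in turn give $C\to0$.
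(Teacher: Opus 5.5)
Your proof is correct, but it takes a different route from the paper's. The paper first uses the exact solution $A(t)=A(0)e^{-at}\to 0$ together with a stabilizability result for nonlinear cascades (Seibert, Corollary 4.3). This reduces the problem to the planar subsystem obtained by setting $A=0$. That subsystem is then handled with $V=\tfrac12 B^2+\tfrac12 C^2$, which gives $V'=-b(B-C)^2-Cf(C)$. You instead build a single Lyapunov function for the full three-dimensional system. Your $V$ restricted to $A=0$ equals $B^2+C^2$, which is twice the paper's function. The extra terms $(\alpha+\tfrac12)A^2+A(B+C)$ absorb the coupling $aA$ in the $B$-equation, and I checked that
\[
V'=-2a\alpha A^2+aA(B-C)-2b(B-C)^2-(A+2C)f(C).
\]
The paper's version has a shorter computation, but it hands the coupling to an external cascade theorem, whose boundedness hypothesis is supplied by Lemma~\ref{Lemma1}. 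Your version is self-contained and gives a strict Lyapunov function on $\Omega$, with $V'<0$ on $\Omega\setminus\{E^*\}$. So GAS follows directly from the basic Lyapunov argument on the invariant set $\Omega$. LaSalle's invariance step is not really needed, since $\{V'=0\}\cap\Omega=\{E^*\}$ already.

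One small correction to Step 1 concerns your justification of negative definiteness. Young's inequality with weight exactly $2b$ cancels the $(B-C)^2$ term completely. It only yields $\bigl(-2a\alpha+\tfrac{a^2}{8b}\bigr)A^2$, which is not negative definite in $(A,B-C)$. The threshold $\alpha>a/(16b)$ is still right, by the discriminant condition $a^2<4\cdot(2a\alpha)\cdot(2b)$. Alternatively, use weight $b$ and take $\alpha>a/(8b)$. The weaker bound would also have sufficed for your LaSalle step. On $\{A=C=0\}$ one has $C'=bB$, so the largest invariant subset there is $\{E^*\}$.
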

\begin{proof}
First, it follows from the first equation of \eqref{eq:1new} that
\begin{equation*}
A(t) = A(0)e^{-at},
\end{equation*}
which implies that $A^* = 0$ is a globally asymptotically stable equilibrium point of the first equation of \eqref{eq:1new}. By using sufficient conditions for the global stabilizability of nonlinear cascade systems \cite{Seibert}, the GAS of \eqref{eq:1new} is reduced to that of the following subsystem
\begin{equation}\label{eq:2new}
\begin{split}
B' &= b C - b B\\
C'&= b B - bC - f(C),
\end{split}
\end{equation}
which is obtained by substituting $A = 0$ into the second equation of \eqref{eq:1new}. We only need to show that $(B^*,\,C^*) = (0,\,0)$ is a globally asymptotically stable equilibrium point of \eqref{eq:2new}. Indeed, consider a Lyapunov function candidate defined by
\begin{equation}\label{eq:6}
V(B, C) = \dfrac{1}{2}B^2 + \dfrac{1}{2}C^2.
\end{equation}
The time derivative of $V$ along with the solutions of \eqref{eq:2new} is computed as
\begin{equation*}
\begin{split}
\dfrac{dV}{dt} &= \dfrac{dV}{dB}\dfrac{dB}{dt} + \dfrac{dV}{dC}\dfrac{dC}{dt}\\
&=B(bC - bB) + C(b B - bC - f(C)) = - b(B - C)^2 - Cf(C).
\end{split}
\end{equation*}
Thus, $\frac{dV}{dt} \leq 0$ for all $B, C \geq 0$ and $\frac{dV}{dt} = 0$ if and only if $B = C = 0$. Consequently, the Lyapunov stability theorem \cite{Khalil} implies the GAS of $(B^*,\,C^*) = (0,\,0)$.

By using the global stabilizability results of two cascade-connected nonlinear systems \cite[Corollary 4.3]{Seibert}, we conclude that $E^*$ of \eqref{eq:1new} is globally asymptotically stable. The proof is complete.
\end{proof}
\section{Dynamical analysis of the discrete-time model}\label{Sec2new}
{
This section is devoted to analyzing dynamical properties of the discrete-time model \eqref{eq:DE1}. Throughout this section, we additionally assume that the function $f$ satisfies:\\
\textbf{({P*}):} The derivative of $f(C)$ at zero exists.\\
Let us introduce the function
\begin{equation}\label{eq:DE0}
g(C) =
\begin{cases}
f'(0), \quad \mbox{if} \quad C = 0,\\
&\\
\dfrac{f(C)}{C} \quad \mbox{if} \quad C \ne 0.
\end{cases}
\end{equation}
Then, $f(C) = Cg(C)$ for $C \geq 0$ and $g(C)$ is continuous. For arbitrary fixed positive values of $A_0, B_0$ and $C_0$, we set $S_0 = A_0 + B_0 + C_0$ and define
\begin{equation}\label{eq:DE3}
M = \max_{0 \leq C \leq S_0}g(C).
\end{equation}
The biological relevance of \eqref{eq:DE1} requires certain conditions on the parameter values. The following result provides conditions under which \eqref{eq:DE1} is biologically meaningful.
\begin{lemma}[Biological relevance]\label{Lemma1}
The system \eqref{eq:DE1} is biologically meaningful under the assumption that
\begin{equation}\label{eq:DE2}
\Delta t \leq \min\bigg\{\dfrac{1}{a},\,\,\,\dfrac{1}{b + M}\bigg\}.
\end{equation}
Furthermore, under this condition, the sequences $\{A_n\}$ and $\{A_n + B_n + C_n\}$ ($n \geq 0$) are nonincreasing.
\end{lemma}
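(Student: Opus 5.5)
We argue by induction on $n$, proving simultaneously that $A_n, B_n, C_n \geq 0$ and that $S_n := A_n + B_n + C_n \leq S_0$. The base case holds by the choice of the initial data.

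The key step is to rewrite each equation of \eqref{eq:DE1} so that every new value is a combination of old values with nonnegative coefficients. Using $f(C_n) = C_n g(C_n)$, we obtain
\begin{equation*}
\begin{split}
A_{n+1} &= (1 - a\Delta t)A_n,\\
B_{n+1} &= (1 - b\Delta t)B_n + b\Delta t\, C_n + a\Delta t\, A_n,\\
C_{n+1} &= \big(1 - b\Delta t - \Delta t\, g(C_n)\big)C_n + b\Delta t\, B_n.
\end{split}
\end{equation*}
Under \eqref{eq:DE2}, we have $1 - a\Delta t \geq 0$ and $1 - b\Delta t \geq 0$, the latter because $\Delta t \leq 1/(b+M) \leq 1/b$. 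Hence $A_{n+1} \geq 0$ and $B_{n+1} \geq 0$ whenever $A_n, B_n, C_n \geq 0$.

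The only delicate point is the coefficient of $C_n$ in the third equation, since $M$ bounds $g$ only on $[0, S_0]$. This is where the induction hypothesis enters. Since $0 \leq C_n \leq S_n \leq S_0$, we get $g(C_n) \leq M$, so
\begin{equation*}
1 - b\Delta t - \Delta t\, g(C_n) \geq 1 - (b+M)\Delta t \geq 0,
\end{equation*}
and therefore $C_{n+1} \geq 0$.

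To close the induction, we sum the three equations of \eqref{eq:DE1}. This gives $S_{n+1} = S_n - \Delta t f(C_n) \leq S_n \leq S_0$, using $f \geq 0$ from \textbf{(P)} together with $C_n \geq 0$. This establishes the induction, and the same identity shows that $\{S_n\}$ is nonincreasing. Finally, $A_{n+1} = (1-a\Delta t)A_n$ with $0 \leq 1 - a\Delta t \leq 1$ and $A_n \geq 0$ shows that $\{A_n\}$ is nonincreasing. Nonnegativity of all components, together with $S_n \leq S_0$, is exactly what biological meaningfulness requires. The main obstacle is coupling the bound $C_n \leq S_0$ to the positivity step through the simultaneous induction.
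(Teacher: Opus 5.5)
Your proof is correct and follows the paper's route. You rewrite \eqref{eq:DE1} using $f(C_n)=C_n g(C_n)$ so that, under \eqref{eq:DE2}, every coefficient is nonnegative, and then you sum the equations to get $S_{n+1}=S_n-\Delta t\, f(C_n)$. Your explicit simultaneous induction on $S_n\le S_0$, which gives $C_n\in[0,S_0]$ and hence $g(C_n)\le M$, is exactly what the paper's ``repeating the above arguments'' tacitly relies on, so your version is the more complete of the two.
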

\begin{proof}
First, we rewrite \eqref{eq:DE1} in the form
\begin{equation}\label{eq:DE7}
\begin{split}
A_{n + 1} &= (1 -a\Delta t) A_n,\\
B_{n+1} &= (1 - b\Delta t)B_n + b\Delta t C_n + a\Delta t A_n,\\
C_{n + 1} &= \big(1 - b\Delta t - g(C_n)\Delta t\big)C_n +  b\Delta t B_n.
\end{split}
\end{equation}
Thus, we have
\begin{equation*}
\begin{split}
A_{1} &= (1 -a\Delta t) A_0,\\
B_{1} &= (1 - b\Delta t)B_0 + b\Delta t C_0 + a\Delta t A_0,\\
C_{1} &= \big(1 - b\Delta t - g(C_0)\Delta t\big)C_0 +  b\Delta t B_0.
\end{split}
\end{equation*}
On the other hand, it follows from \eqref{eq:DE2} that
\begin{equation*}
\begin{split}
&1 - a\Delta t \geq 0,\\
&1 - b \Delta t \geq 1 - b \Delta t - g(C_0)\Delta t \geq 1 - b\Delta t - M\Delta t \geq 0.
\end{split}
\end{equation*}
Thus, we conclude that: $A_1,\,B_1,\,C_1 \geq 0$.

From \eqref{eq:DE1}, we obtain
\begin{equation*}
\begin{split}
&A_{1} - A_0 = -a\Delta t \leq 0,\\
&\big(A_{1} + B_{1} + C_{1}\big) - \big(A_0 + B_0 + C_0\big) = -f(C_0)\Delta t \leq 0,
\end{split}
\end{equation*}
which implies that $A_1 \leq A_0$ and $A_1 + B_1 + C_1 \leq A_0 + B_0 + C_0$.

Repeating the above arguments, we obtain: If $A_0, B_0, C_0 \geq 0$, then $A_n,\,B_n,\,C_n \geq 0$, $A_n \leq A_{n - 1}$ and $A_n + B_n + C_n \leq A_{n - 1} + B_{n - 1} + C_{n - 1}$ for $n \geq 1$. This is the desired conclusion. The proof is complete.
\end{proof}
Next, we determine the set of equilibrium points of \eqref{eq:DE1}. It is easy to verify that any equilibrium point of \eqref{eq:DE1} is a solution of the system \eqref{eq:EQ1}. Hence, similarly to the continuous-time model \eqref{eq:1new}, \eqref{eq:DE1} also possesses a unique equilibrium point $E^* = (0,\,0,\,0)$. Its asymptotic stability is established as follows.
\begin{theorem}[Local asymptotic stability analysis]\label{Theorem5}
Under the condition \eqref{eq:DE2}, the trivial equilibrium point of \eqref{eq:DE1} is locally asymptotically stable for all parameter values.
\end{theorem}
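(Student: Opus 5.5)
Our approach is to linearize \eqref{eq:DE1} at $E^*=(0,0,0)$ and show that all eigenvalues of the Jacobian have modulus strictly less than one. Assumption \textbf{(P*)} guarantees that $f'(0)$ exists. Moreover, since $f(0)=0$ and $f\ge 0$ on $[0,\infty)$, we have $f'(0)=g(0)\ge 0$, and by the definition \eqref{eq:DE3} also $f'(0)\le M$.

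The Jacobian of the right-hand side of \eqref{eq:DE1} at $E^*$ is block lower triangular:
\begin{equation*}
J=\begin{pmatrix} 1-a\Delta t & 0 & 0\\ a\Delta t & 1-b\Delta t & b\Delta t\\ 0 & b\Delta t & 1-b\Delta t-f'(0)\Delta t\end{pmatrix}.
\end{equation*}
One eigenvalue is therefore $\lambda_1=1-a\Delta t$. Condition \eqref{eq:DE2} gives $0\le \lambda_1<1$, since $a>0$, so $|\lambda_1|<1$.

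The remaining eigenvalues are those of the symmetric $2\times 2$ block
\begin{equation*}
K=I-\Delta t\,N,\qquad N=\begin{pmatrix} b & -b\\ -b & b+f'(0)\end{pmatrix}.
\end{equation*}
Since $K$ is symmetric, its eigenvalues are real and equal $1-\Delta t\,\mu$, where $\mu$ runs over the eigenvalues of $N$. We have $\operatorname{tr}N=2b+f'(0)>0$ and $\det N=bf'(0)\ge 0$, so $N$ is positive semidefinite. Its eigenvalues are bounded above by $\operatorname{tr}N=2b+f'(0)\le 2b+M$, hence
\begin{equation*}
1-\Delta t\,\mu\ \ge\ 1-\Delta t(2b+M)\ \ge\ 1-\dfrac{2b+M}{b+M}\ >\ -1 .
\end{equation*}
Here we used $\Delta t\le 1/(b+M)$ from \eqref{eq:DE2}. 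This bounds the eigenvalues of $K$ away from $-1$.

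The main obstacle is the upper bound, i.e.\ showing $\mu>0$ so that $1-\Delta t\mu<1$. If $f'(0)>0$, then $\det N>0$, both eigenvalues of $N$ are positive, and all eigenvalues of $J$ lie strictly inside the unit disk. Local asymptotic stability then follows from the standard linearization theorem for maps (Elaydi).

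In the degenerate case $f'(0)=0$ (e.g.\ Holling type III), $N$ has a zero eigenvalue with eigenvector $(1,1)$, so $J$ has eigenvalue $1$ and linearization is inconclusive. There I would argue directly, mirroring the Lyapunov argument of Theorem \ref{Theorem1}:
\begin{enumerate}
\item Stability follows from Lemma \ref{Lemma1}, since $S_n=A_n+B_n+C_n$ is nonincreasing and nonnegative, so small initial data stay small.
\item $\sum_n f(C_n)\Delta t\le S_0<\infty$, so $f(C_n)\to 0$ and thus $C_n\to 0$, using $f>0$ off zero and boundedness of $C_n$.
\item $A_n\to 0$ geometrically.
\item Then the $B$-equation, a convex combination for $\Delta t\le 1/b$ with inputs $C_n,A_n\to 0$, yields $B_n\to 0$.
\end{enumerate}
This gives attractivity and hence asymptotic stability in the positively invariant region, so the statement holds for all parameter values.
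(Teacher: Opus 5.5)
Your proof is correct. It differs from the paper's in how it treats the $2\times 2$ block, and it covers a case the paper's proof misses.

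\textbf{How the block is handled.} The paper also linearizes at $E^*$ and splits off $\lambda_1=1-a\Delta t$. It then localizes the eigenvalues of $J_1$ with the trace--determinant (Jury) criterion, checking $\det(J_1)<1$, $1-\operatorname{Tr}(J_1)+\det(J_1)=bf'(0)(\Delta t)^2>0$ and $1+\operatorname{Tr}(J_1)+\det(J_1)>0$. You instead use that the block is symmetric: $K=I-\Delta t N$ with $N$ positive semidefinite. So its eigenvalues are real and lie in $[1-\Delta t(2b+f'(0)),\,1]$, and with $f'(0)=g(0)\le M$ and \eqref{eq:DE2} you get the bound $>-1$ in one line. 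You should say explicitly that $M>0$, which holds because $S_0\ge A_0>0$ and $g>0$ on $(0,S_0]$. Your route also avoids the paper's computation of $1+\operatorname{Tr}(J_1)+\det(J_1)$, whose printed constant should be $4$, not $1$.

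\textbf{The case $f'(0)=0$.} The paper's assertion $bf'(0)(\Delta t)^2>0$ silently assumes $f'(0)>0$. Neither \textbf{(P)} nor \textbf{(P*)} guarantees this, and it fails for Holling type III responses and for $f_4$ in the paper's own Table~\ref{Table1}. In that case $J$ has eigenvalue $1$ and the linearization theorem says nothing, exactly as you noticed.

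Your direct argument for this case is sound:
\begin{itemize}
\item stability comes from the monotonicity of $S_n$;
\item $\sum_n \Delta t\, f(C_n)\le S_0$ forces $C_n\to 0$, by continuity and positivity of $f$ on $(0,S_0]$;
\item $A_n$ decays geometrically;
\item $B_{n+1}=(1-b\Delta t)B_n+u_n$ with $0\le 1-b\Delta t<1$ and $u_n\to 0$ gives $B_n\to 0$.
\end{itemize}
This argument never uses $f'(0)=0$. It gives attractivity of $E^*$ from every initial value in $\Omega$ under \eqref{eq:DE2}, which is an elementary and fully rigorous substitute for the LaSalle argument of Theorem~\ref{Theorem6new}.

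\textbf{What each route buys.} Your linearization branch adds only a geometric convergence rate when $f'(0)>0$. The paper's Jury route has the merit of not relying on symmetry, so it would survive unequal exchange rates between $B$ and $C$. As written, however, it does not cover the degenerate case that the theorem's ``for all parameter values'' claims.
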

\begin{proof}
The Jacobian matrix of the system \eqref{eq:DE1} evaluated at $E^*$ is given by
\begin{equation}\label{LAS}
J := J(E^*) = 
\begin{pmatrix}
1 - a\Delta t&0&0\\
a\Delta t&1 - b\Delta t& b\Delta t\\
0&b\Delta t & 1 - b\Delta t - f'(0)\Delta t
\end{pmatrix}.
\end{equation}
Thus, $J$ has three eigenvalues $\lambda_i$ ($i = 1, 2, 3$), where $\lambda_1 = 1 - a\Delta t$ and $\lambda_2$ and $\lambda_3$ are the eigenvalues of the submatrix
\begin{equation*}
J_1 = 
\begin{pmatrix}
1 - b\Delta t& b\Delta t\\
b\Delta t & 1 - b\Delta t - f'(0)\Delta t
\end{pmatrix}.
\end{equation*}
It is easy to see that $|\lambda_1| < 1$. On the other hand, we have
\begin{equation*}
\begin{split}
Tr(J_1) &= 2 - 2b\Delta t - f'(0)\Delta t,\\
\det(J_1) &= 1 - 2b\Delta t - f'(0)\Delta t + bf'(0)(\Delta t)^2.
\end{split}
\end{equation*}
By using \eqref{eq:DE2}, we obtain
\begin{equation*}
\begin{split}
\det(J_1) - 1 &= -b\Delta t - f'(0)\Delta t - b\Delta t(1 - f'(0)\Delta t) < 0\\
1 - Tr(J_1) + \det(J_1) &=  bf'(0)(\Delta t)^2> 0,\\
1 + Tr(J_1) + \det(J_1) &= 1 - 4b\Delta t - 2f'(0)\Delta t +  bf'(0)(\Delta t)^2\\
&> 4\big(1 - b\Delta t - f'(0)\Delta t\big) + bf'(0)(\Delta t)^2 > 0,
\end{split}
\end{equation*}
which is equivalent to
\begin{equation*}
|Tr(J_1)| < 1 + \det(J_1) < 2.
\end{equation*}
By using \cite[Theorem 2.10]{Allen1}, we conclude that $|\lambda_2| < 1$ and $|\lambda_3| < 1$. Then, the local asymptotic stability of $E^*$ is derived from the linearalized method \cite[Theorem 1.3.7]{Stuart}. The proof is complete.
\end{proof}
In order to analyze the GAS of \eqref{eq:DE1}, we need the following assumption: The function $g$ defined in \eqref{eq:DE0} attains a positive minimum value on $[0,\,\,\,S_0]$, that is 
\begin{equation}\label{eq:DE3a}\
\min_{0 \leq C \leq S_0}g(C) = m > 0,
\end{equation}
for some positive real number $m$.

\begin{theorem}[Global asymptotic stability analysis]\label{Theorem5new}
The trivial equilibrium point of \eqref{eq:DE1} is not only locally asymptotically stable but also globally asymptotically stable for all parameter values if \eqref{eq:DE2} and \eqref{eq:DE3a} hold.
\end{theorem}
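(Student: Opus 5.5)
Since local asymptotic stability is already given by Theorem \ref{Theorem5}, it remains to prove global attractivity: every solution of \eqref{eq:DE1} with nonnegative initial data converges to $E^*$. By Lemma \ref{Lemma1} (discrete version), under \eqref{eq:DE2} the iterates stay nonnegative, $A_n$ is nonincreasing, and $S_n := A_n+B_n+C_n$ is nonincreasing with $S_n \le S_0$. Hence $0 \le C_n \le S_0$ for all $n$, so $m \le g(C_n) \le M$ along the whole orbit. The first equation gives $A_n = (1-a\Delta t)^n A_0 \to 0$ directly.

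The natural first step is to use $S_n$ as a discrete Lyapunov function. From \eqref{eq:DE1},
\[
S_{n+1} - S_n = -\Delta t\, f(C_n) = -\Delta t\, g(C_n) C_n \le -m\Delta t\, C_n \le 0 .
\]
Since $S_n$ is nonincreasing and bounded below by $0$, it converges to some limit $S^\infty \ge 0$. Summing the inequality gives $m\Delta t\sum_{n\ge0} C_n \le S_0 < \infty$, so $C_n \to 0$. We already know $A_n \to 0$, so $B_n = S_n - A_n - C_n \to S^\infty$. The main point is to exclude $S^\infty > 0$.

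For this, pass to the limit in the $C$-equation of \eqref{eq:DE7}:
\[
C_{n+1} = \big(1 - b\Delta t - g(C_n)\Delta t\big)C_n + b\Delta t\, B_n .
\]
The left side and the first term on the right tend to $0$, because the coefficient lies in $[0,1]$ and $C_n\to0$. Therefore $b\Delta t\, B_n \to 0$, so $B_n\to 0$ (as $b>0$), which forces $S^\infty=0$. This step is where positivity of $m$ in \eqref{eq:DE3a} is used: it converts summability of $f(C_n)$ into summability of $C_n$. Without \eqref{eq:DE3a}, one would instead argue via continuity of $f$ and property \textbf{(P)} along convergent subsequences, so the assumption mainly streamlines the argument.

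Combining global attractivity with the local asymptotic stability from Theorem \ref{Theorem5} gives global asymptotic stability of $E^*$. One should also note that the choice of $M$ and $m$ in \eqref{eq:DE3} and \eqref{eq:DE3a} depends on $S_0$; the bound $C_n \le S_0$ from monotonicity of $S_n$ is exactly what makes these constants valid along the entire orbit.
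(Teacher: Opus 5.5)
Your proof is correct, but it takes a different route from the paper's. The paper does not use $S_n$ in this proof. It uses $g(C_n)\ge m$ to dominate the nonlinear map componentwise by a linear one, $X_{n+1}\le PX_n$ for $X_n=(A_n,B_n,C_n)^T$, where $P$ is the Jacobian at $E^*$ with $f'(0)$ replaced by $m$. Condition \eqref{eq:DE2} makes $P$ entrywise nonnegative, so the inequality iterates to $0\le X_n\le P^nX_0$. The trace/determinant (Jury) conditions on the $2\times 2$ block $P^*$ then place all eigenvalues of $P$ in the open unit disk, hence $P^n\to 0$. In that argument $m>0$ is essential, since $1-\mathrm{Tr}(P^*)+\det(P^*)=bm(\Delta t)^2$. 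The payoff is quantitative: geometric convergence at a rate governed by $\rho(P)$. Your argument telescopes $S_{n+1}-S_n=-\Delta t f(C_n)$ to get $\sum_n C_n<\infty$ and then reads $B_n\to 0$ off the $C$-equation. It needs no spectral computation and gives only qualitative convergence. As you observe, it also survives the removal of \eqref{eq:DE3a} via continuity of $f$ and \textbf{(P)}; in effect it is a hands-on version of the LaSalle argument the paper gives in Theorem~\ref{Theorem6new}. One caveat on that remark: without \eqref{eq:DE3a} you should take Lyapunov stability directly from $S_n\le S_0$, since the $\ell^1$ norm is nonincreasing on the invariant orthant. Theorem~\ref{Theorem5} would no longer be available, because its linearization argument needs $f'(0)=g(0)>0$, and it is precisely \eqref{eq:DE3a} that guarantees this.
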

\begin{proof}
From Theorem \ref{Theorem5}, it is sufficient to show that
\begin{equation*}
\lim_{n \to \infty}\big(A_n,\,B_n,\,C_n\big) = (0,\,0,\,0).
\end{equation*}
It follows from the last equation of \eqref{eq:DE7} that
\begin{equation*}
C_{n + 1} = (1 - b\Delta t - g(C_n)\Delta t)C_n +  b\Delta t B_n \leq (1 - b\Delta t - m\Delta t)C_n +  b\Delta t B_n.
\end{equation*}
Thus, \eqref{eq:DE7} implies that
\begin{equation*}
\begin{pmatrix}
A_{n + 1}\\
B_{n + 1}\\
C_{n + 1}
\end{pmatrix}
\leq
P
\begin{pmatrix}
A_n\\
B_n\\
C_n
\end{pmatrix},
\end{equation*}
where $P$ is defined by
\begin{equation*}
P :=
\begin{pmatrix}
1 - a\Delta t&0&0\\
a\Delta t&1 - b\Delta t& b\Delta t\\
0&b\Delta t & 1 - b\Delta t - m\Delta t.
\end{pmatrix}.
\end{equation*}
From \eqref{eq:DE2}, all the entries of $P$ are positive. Thus, for $n \geq 1$
\begin{equation}\label{eq:DE8}
\begin{pmatrix}
A_n\\
B_n\\
C_n
\end{pmatrix}
\leq P^n
\begin{pmatrix}
A_0\\
B_0\\
C_0
\end{pmatrix}.
\end{equation}
The matrix $P$ always has an eigenvalue given by $\mu_1 = 1 - a\Delta t$, which satisfies $|\mu_1| < 1$. Meanwhile, the two remaining eigenvalues coincide with those of the submatrix
\begin{equation*}
P^* :=
\begin{pmatrix}
1 - b\Delta t& b\Delta t\\
b\Delta t & 1 - b\Delta t - m\Delta t.
\end{pmatrix}.
\end{equation*}
Under the condition \eqref{eq:DE2}, we have the estimates
\begin{equation*}
\begin{split}
\det(P^*) - 1 &= -b\Delta t - m\Delta t - b\Delta t(1 - m\Delta t) < 0\\
1 - Tr(P^*) + \det(P^*) &=  bm(\Delta t)^2> 0,\\
1 + Tr(P^*) + \det(P^*) &= 1 - 4b\Delta t - 2m\Delta t +  bm(\Delta t)^2\\
&> 4\big(1 - b\Delta t - m\Delta t\big) + bm(\Delta t)^2 > 0.
\end{split}
\end{equation*}
Consequently, we conclude that all the eigenvalues $\mu_i$ ($i = 1, 2, 3$) of $P$ satisfy $|\mu_i| < 1$, which implies that $\lim_{n \to \infty}P^n = 0$. Then, it follows from \eqref{eq:DE8} that $\lim_{n \to \infty}\big(A_n,\,B_n,\,C_n\big) = (0,\,0,\,0)$. This is the desired conclusion. The proof is complete.
\end{proof}

\begin{remark}
The conditions \eqref{eq:DE3} and \eqref{eq:DE3a} imply that
\begin{equation*}
mC \leq f(C) \leq MC, \quad C \geq 0.
\end{equation*}
This property is commonly observed in many biological, ecological, and epidemiological models \cite{Allen1, Smith}. The saturating function used in \eqref{eq:1} is a typical example.
\end{remark}
}

{
In the following theorem, we provide another proof of the GAS of the trivial equilibrium of \eqref{eq:DE1}, in which the condition \eqref{eq:DE3a} is no longer required.
}
\begin{theorem}[Global asymptotic stability analysis without condition \eqref{eq:DE3a}]\label{Theorem6new}
The trivial equilibrium point of \eqref{eq:DE1} is not only locally asymptotically stable but also globally asymptotically stable for all parameter values if \eqref{eq:DE2} holds.
\end{theorem}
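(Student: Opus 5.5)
Local stability is Theorem \ref{Theorem5}, so it remains to prove $(A_n,B_n,C_n)\to(0,0,0)$ for every nonnegative initial datum under \eqref{eq:DE2}. The plan is a discrete Lyapunov/LaSalle-type argument on the total mass $S_n=A_n+B_n+C_n$.

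By Lemma \ref{Lemma1}, $S_n$ is nonincreasing and nonnegative, so $S_n\to S^*\ge 0$. Summing the identity $S_{n+1}-S_n=-\Delta t f(C_n)$ over $n$ gives
\begin{equation*}
\Delta t\sum_{n=0}^{\infty}f(C_n)=S_0-S^*<\infty,
\end{equation*}
hence $f(C_n)\to 0$. The iterates satisfy $0\le C_n\le S_0$, $f$ is continuous on the compact set $[0,S_0]$, and by \textbf{(P)} its only zero there is $C=0$. Therefore $C_n\to 0$: otherwise a subsequence $C_{n_k}\to c_0>0$ would give $f(C_{n_k})\to f(c_0)>0$. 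Likewise $A_n=(1-a\Delta t)^nA_0\to 0$ when $a\Delta t<1$. In the boundary case $a\Delta t=1$ we have $A_n=0$ for $n\ge1$, so $A_n\to 0$ in either case.

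It remains to show $B_n\to 0$, which I take from the third equation of \eqref{eq:DE1}:
\begin{equation*}
b\Delta t\,B_n = C_{n+1}-C_n+b\Delta t\,C_n+\Delta t f(C_n).
\end{equation*}
Every term on the right tends to $0$, so $B_n\to0$ because $b>0$. Thus $S^*=0$ and the solution converges to $E^*$.

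Combining this global attractivity with the local asymptotic stability of Theorem \ref{Theorem5} gives GAS. If a uniform statement is wanted, Lyapunov stability also follows directly from $0\le A_n,B_n,C_n\le S_n\le S_0$, with $S$ acting as a Lyapunov function. The step needing most care is the role of $M$ in \eqref{eq:DE2}, since it depends on $S_0$. For each fixed initial datum, \eqref{eq:DE2} is exactly what guarantees positivity and boundedness through Lemma \ref{Lemma1}, and the argument above uses nothing more than that. No lower bound $m>0$ on $g$ is needed, because the positivity of $f$ away from $0$ together with compactness of $[0,S_0]$ replaces condition \eqref{eq:DE3a}.
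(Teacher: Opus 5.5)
Your proof is correct, and it takes a genuinely different route from the paper's, although both rest on the same conserved-or-decreasing quantity.

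The paper uses the Lyapunov function $V=(A_n+B_n+C_n)^2$ and computes $\Delta V=-\Delta t\, f(C_n)\,(S_{n+1}+S_n)\le 0$ to obtain stability. It then invokes LaSalle's invariance principle on the set where $\Delta V=0$, i.e.\ $C_n=0$, to conclude convergence.

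You instead work with $S_n$ directly; the square in the paper plays no essential role. Your argument has four steps:
\begin{itemize}
\item telescope $S_{n+1}-S_n=-\Delta t f(C_n)$ to get summability of $f(C_n)$;
\item use compactness of $[0,S_0]$ and property \textbf{(P)} to upgrade $f(C_n)\to 0$ to $C_n\to 0$;
\item read off $B_n\to 0$ from the $C$-equation;
\item get $A_n\to 0$ from the explicit formula.
\end{itemize}

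What your route buys is self-containedness. The paper's appeal to LaSalle silently requires identifying the largest invariant subset of $\{C=0\}$, which the paper does not do. That subset is the origin only because $C\equiv 0$ forces $b\Delta t\,B_n=0$, and then $B\equiv 0$ forces $a\Delta t\,A_n=0$. Your step recovering $b\Delta t\,B_n$ from $C_{n+1}-C_n+b\Delta t\,C_n+\Delta t f(C_n)$ is exactly the hands-on version of that invariance argument. You also get the quantitative bound $\Delta t\sum_n f(C_n)\le S_0$ for free.

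The paper's route buys brevity: it obtains stability and attractivity from a single function inside a standard framework.

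Both proofs share the caveat you correctly flag. Since $M$ in \eqref{eq:DE2} depends on $S_0$, the ``global'' statement is really a statement about each initial datum for which \eqref{eq:DE2} holds. Both arguments also tacitly need $a,b>0$.
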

\begin{proof}
Consider a Lyapunov function candidate defined by
\begin{equation*}
V\big(A_n,\,B_n,\,C_n\big) = \big(A_n + B_n + C_n\big)^2.
\end{equation*}
Then, the variation of $V$ relative to \eqref{eq:DE1} is given by
\begin{equation}\label{eq:GASnew}
\begin{split}
&\Delta V\big(A_n,\,B_n,\,C_n\big)  = V\big(A_{n + 1},\,B_{n + 1},\,C_{n + 1}\big) - V\big(A_n,\,B_n,\,C_n\big) \\
&= \big(A_{n+1} + B_{n+1} + C_{n+1}\big)^2 - \big(A_n + B_n + C_n\big)^2\\
&= \Big[\big(A_{n+1} + B_{n+1} + C_{n+1}\big) - \big(A_n + B_n + C_n\big)\Big] \times \Big[\big(A_{n+1} + B_{n+1} + C_{n+1}\big) + \big(A_n + B_n + C_n\big)\Big]\\
&=-\Delta t f(C_n)\Big[\big(A_{n+1} + B_{n+1} + C_{n+1}\big) + \big(A_n + B_n + C_n\big)\Big],
\end{split}
\end{equation}
which implies that $\Delta V\big(A_n,\,B_n,\,C_n\big)  \leq 0$ for all $A_n, B_n, C_n \geq 0$. From Lyapunov stability theory \cite{Elaydi, LaSalle}, we conclude that the origin is stable.

On the other hand, \eqref{eq:GASnew} implies that $\Delta V = 0$ if and only if $C_n = 0$. Thus, we deduce from LaSalle's invariance principle \cite{Elaydi, LaSalle} that
\begin{equation*}
\lim_{n \to \infty}\big(A_n,\,B_n,\,C_n\big) = (0,\,0,\,0).
\end{equation*}
Consequently, the trivial equilibrium point is globally asymptotically stable. This is the desired conclusion. The proof is completed.
\end{proof}
\section{Numerical Experiments}\label{Sec3}
Here, we conduct numerical experiments to support the theoretical findings. 
\subsection{Dynamics of the continuous-time model}\label{Subsec3.1}
In numerical experiments reported below, we use the classical four-stage Runge-Kutta method (see \cite{Ascher}) with a step size $h = 10^{-4}$ to solve the model \eqref{eq:1new}. Also, the parameters given in Table \ref{Table1} will also be used. The graphs of the rate functions in Table \ref{Table1} are represented in Figure \ref{Fig:2}.
\begin{table}[H]
\centering
\caption{The parameters used in the numerical experiments}\label{Table1}
\begin{tabular}{cccccccccccc}
\hline
Set & $a$ & $b$ & $\alpha$ & $\beta$ & $f(C)$ & Remark\\
\hline 
1 & 0.08 & 0.25 & 0.1 & 1 & $f_1(C) = \dfrac{\alpha C}{1 + \beta C}$ & Saturated (monotone) function\\
\hline
2 & 0.08 & 0.25 & 0.1 & 1 & $f_2(C) = \alpha\big(1 - e^{-\beta C}\big)$ & Saturated (monotone) function\\
\hline
3 & 0.08 & 0.25 & 0.1 & 1 & $f_3(C) = \dfrac{\alpha C}{1 + \beta C^2}$ & Nonmonotone function\\
\hline
4 & 0.08 & 0.25 & 0.1 & 1 & $f_4(C) = \dfrac{\alpha C^2}{1 + \beta C^{3}}$ & Nonmonotone function\\
\hline
\end{tabular}
\end{table}
The solutions of the generalized model \eqref{eq:1new} over $t \in [0,\,200]$, using the parameters in Table \ref{Table1} and the set of initial data $\Gamma := \big\{\big(A_0,\,0,\,0\big)|A_0 = 1, 2, 3, 4, 5\big\}$, are represented in Figures \ref{Fig:3}--\ref{Fig:6}. It is clear that the unique equilibrium point is globally asymptotically stable. However, compared with the other cases, the solutions corresponding to the $f_4$ function approach the stable equilibrium at a slower rate. To see the difference between the solutions, we consider the components $B$ and $C$ over the interval $[0,\,100]$ as shown in Figures \ref{Fig:7} and \ref{Fig:8}. From these, we observe that the different rate functions lead to distinct dynamical behaviors of the corresponding models. Therefore, by treating $f$ as a controllable parameter, the model becomes useful for parameter estimation and identification problems.
\begin{figure}[H]
\centering
\includegraphics[height=10cm,width=16cm]{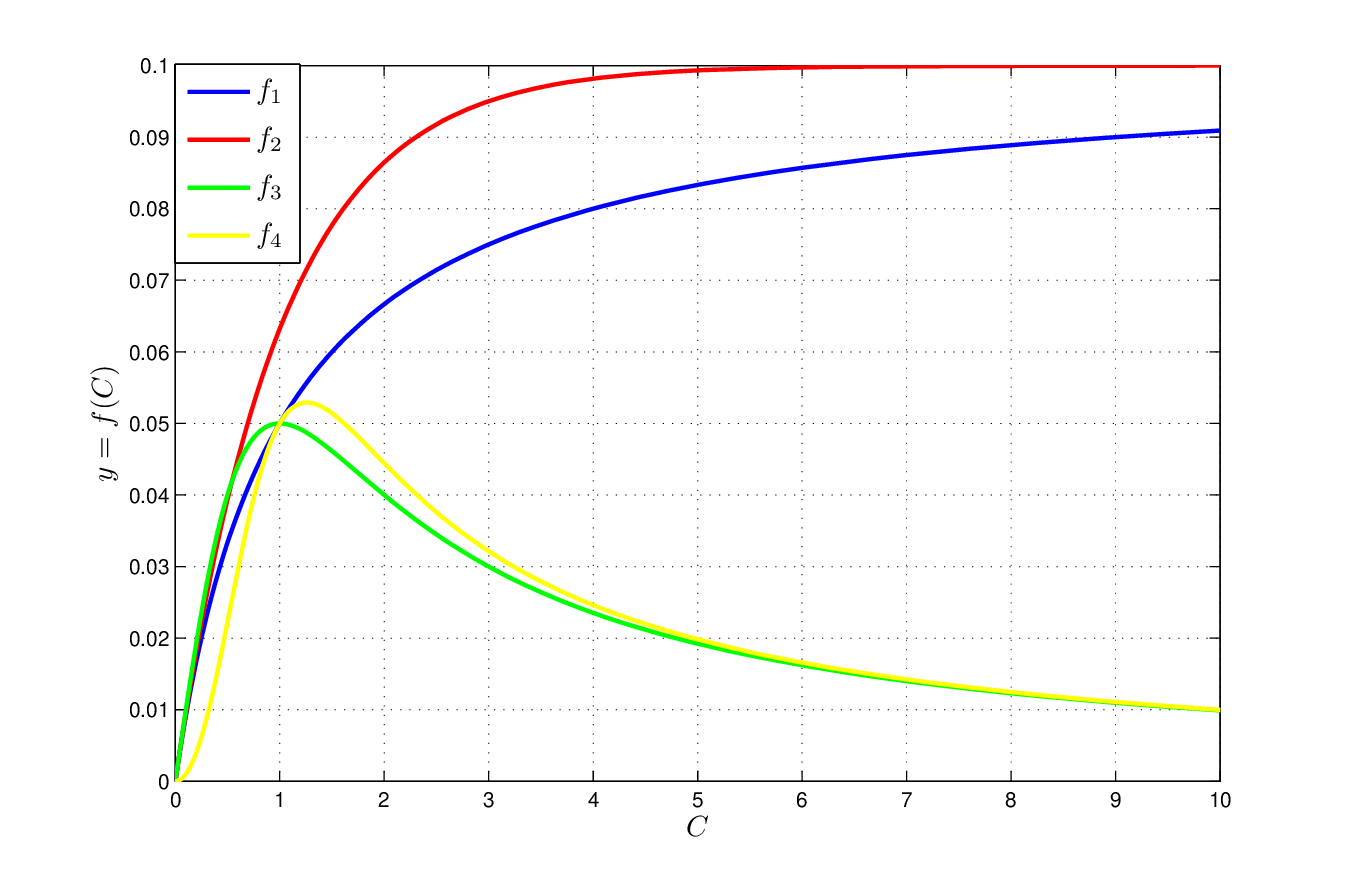}
\caption{The graphs of the rate functions in Table \ref{Table1}.}\label{Fig:2}
\end{figure}
\begin{figure}[H]
\centering
\includegraphics[height=10cm,width=16cm]{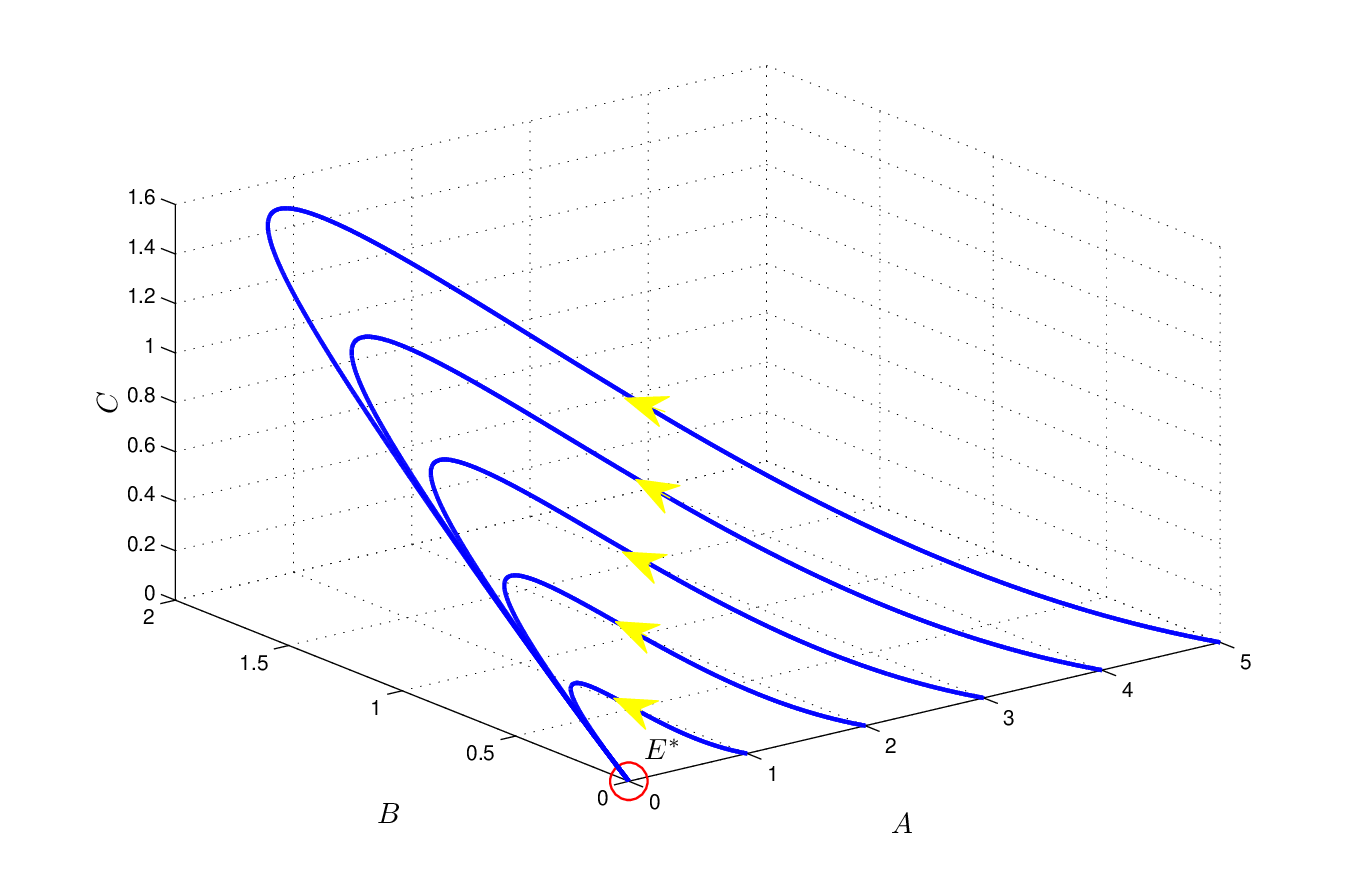}
\caption{The solutions in the phase spaces with the parameter Set $1$ in Table \ref{Table1}.}\label{Fig:3}
\end{figure}
\begin{figure}[H]
\centering
\includegraphics[height=10cm,width=16cm]{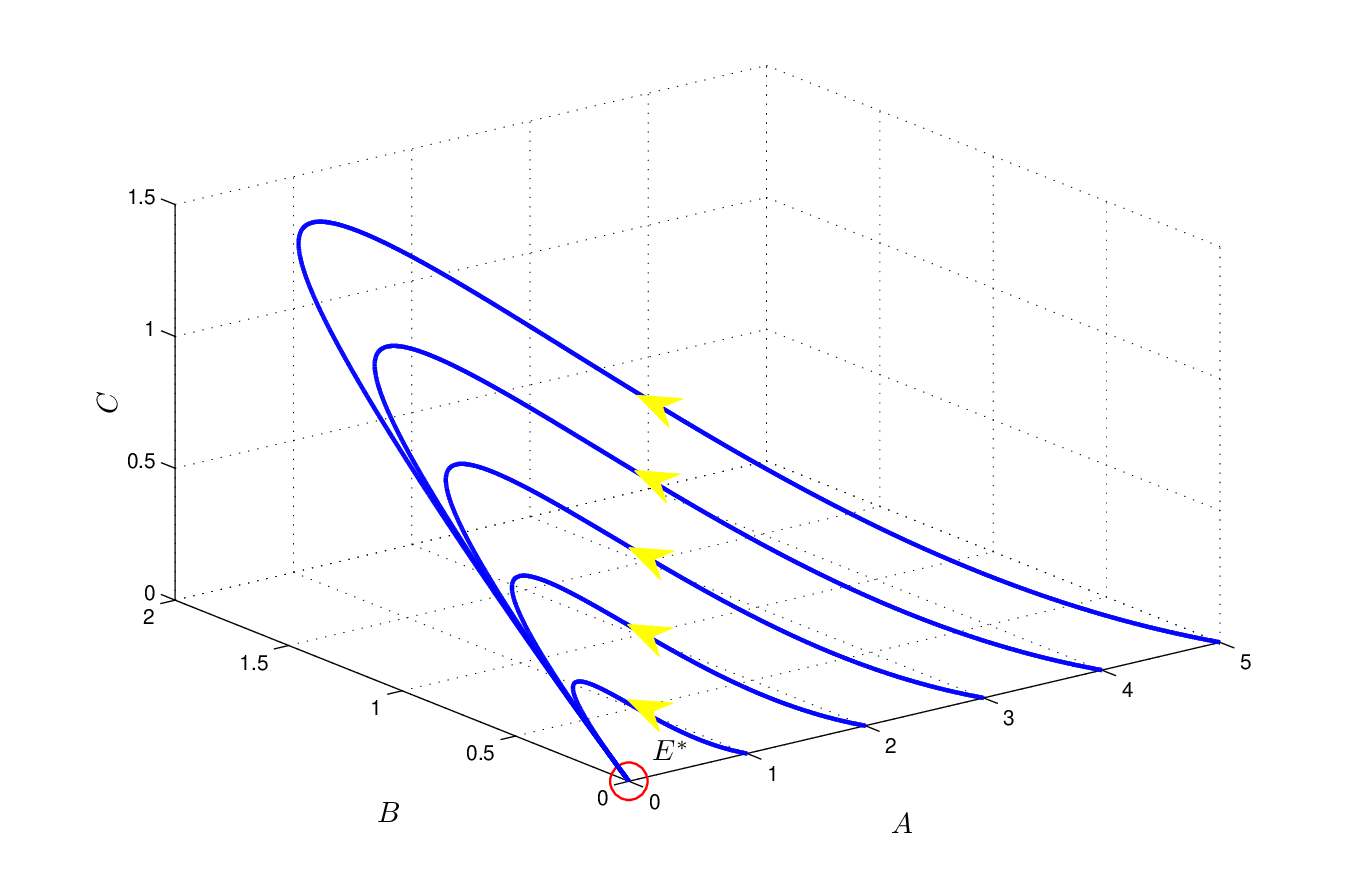}
\caption{The solutions in the phase spaces with the parameter Set $2$ in Table \ref{Table1}.}\label{Fig:4}
\end{figure}
\begin{figure}[H]
\centering
\includegraphics[height=10cm,width=16cm]{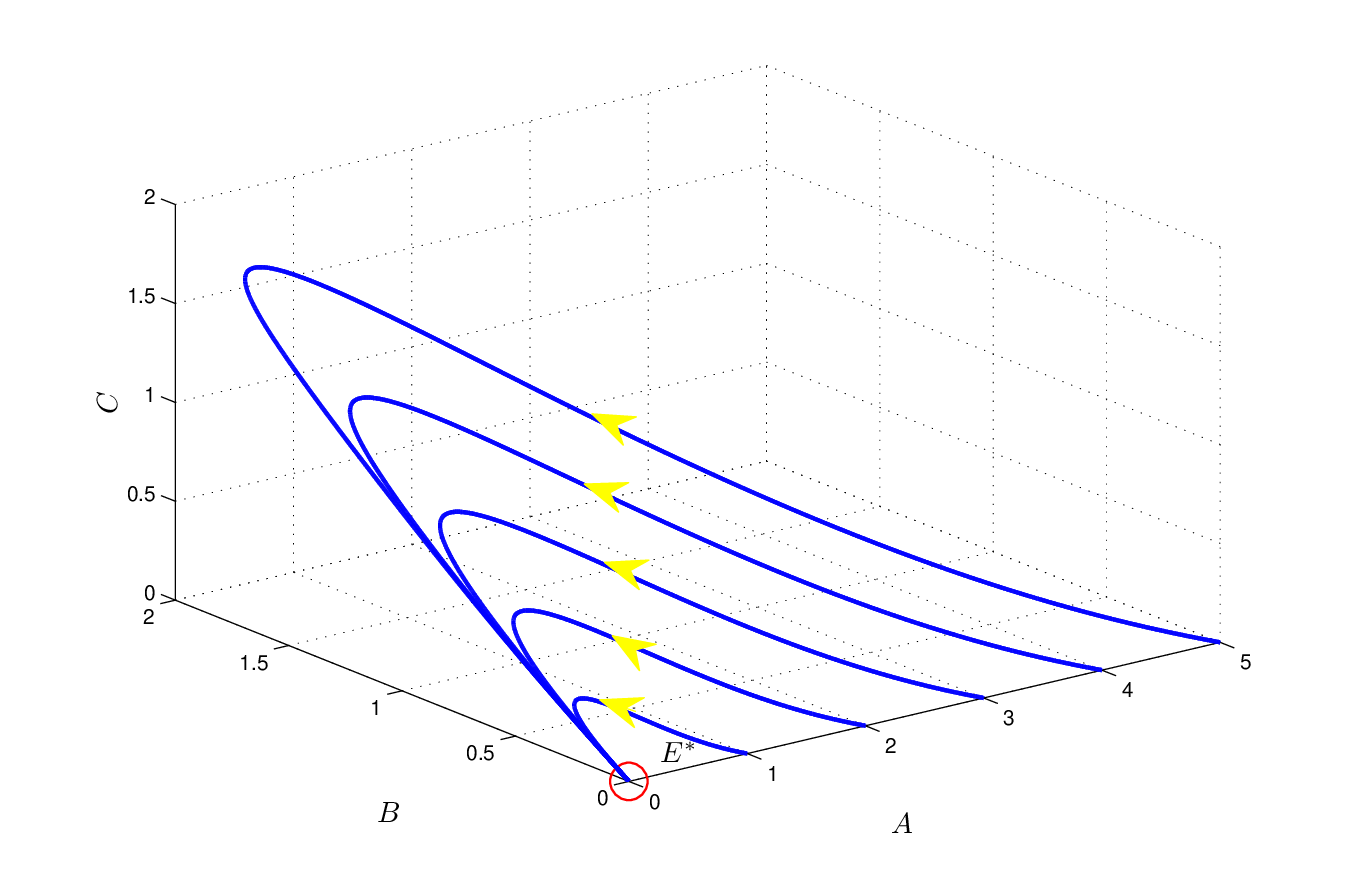}
\caption{The solutions in the phase spaces with the parameter Set $3$ in Table \ref{Table1}.}\label{Fig:5}
\end{figure}
\begin{figure}[H]
\centering
\includegraphics[height=10cm,width=16cm]{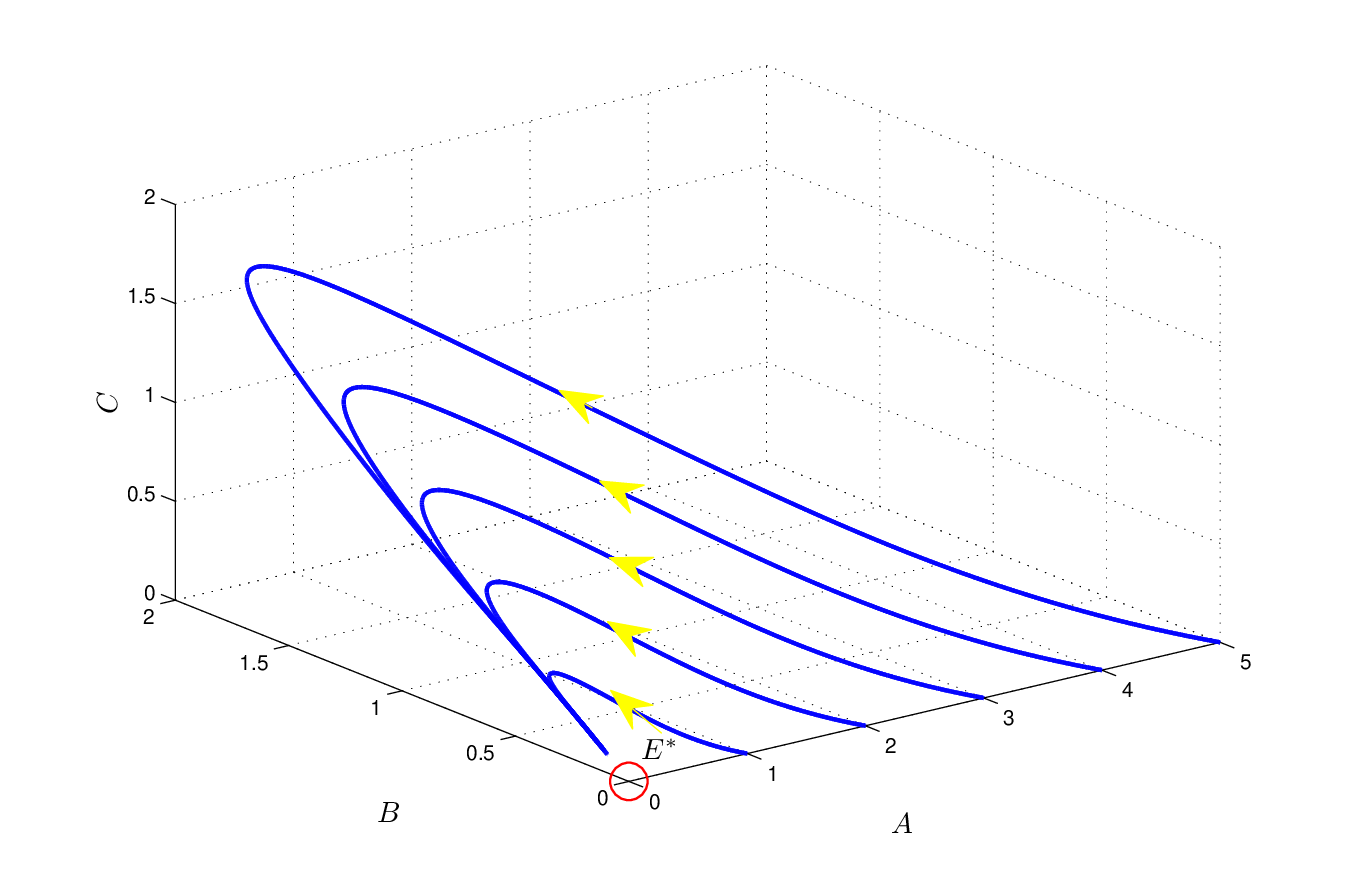}
\caption{The solutions in the phase spaces with the parameter Set $4$ in Table \ref{Table1}.}\label{Fig:6}
\end{figure}
\begin{figure}[H]
\centering
\includegraphics[height=10cm,width=16cm]{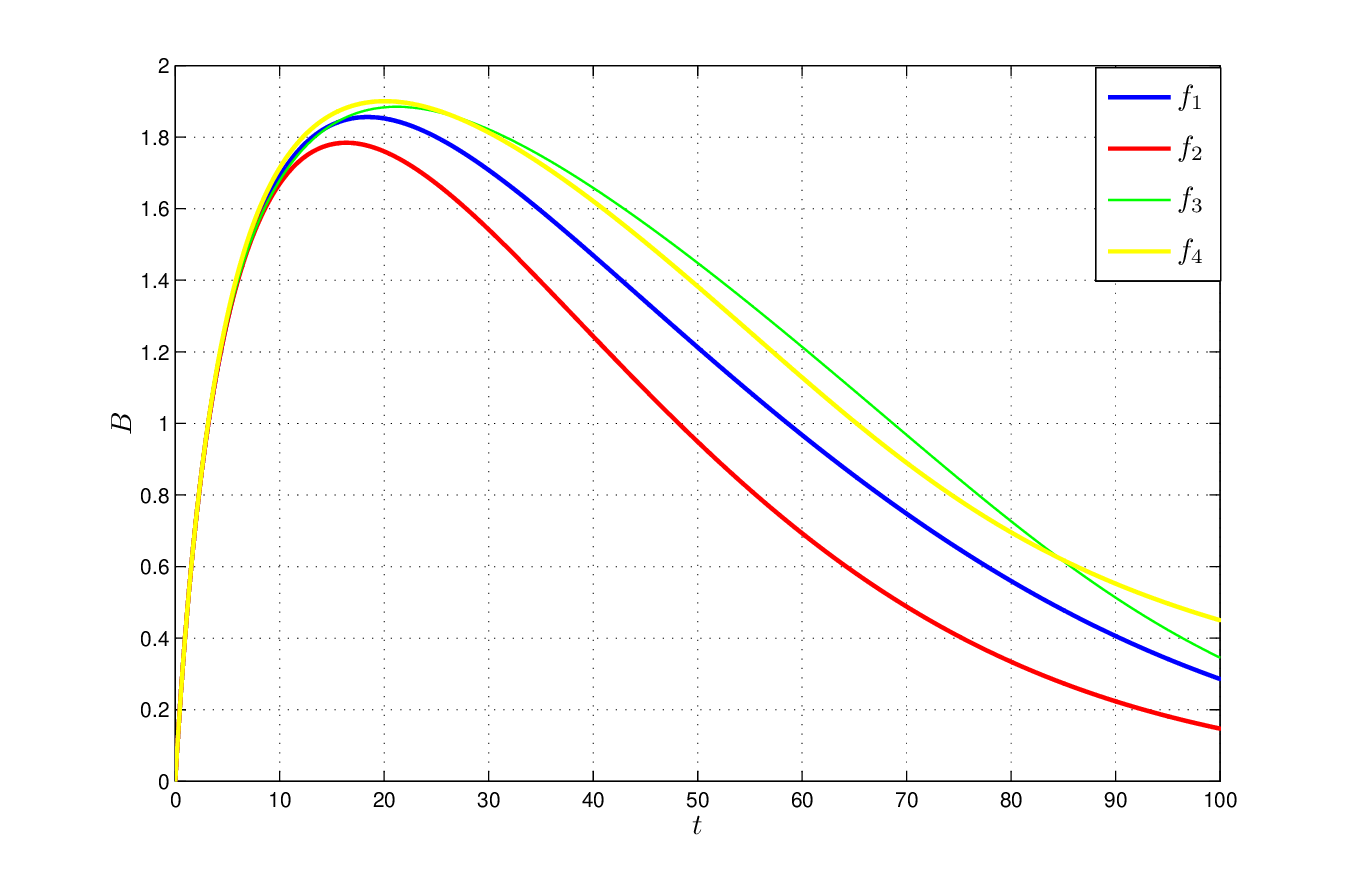}
\caption{The $B$ components corresponding to the functions $f_1, f_2, f_3, f_4$ in Table \ref{Table1}.}\label{Fig:7}
\end{figure}
\begin{figure}[H]
\centering
\includegraphics[height=10cm,width=16cm]{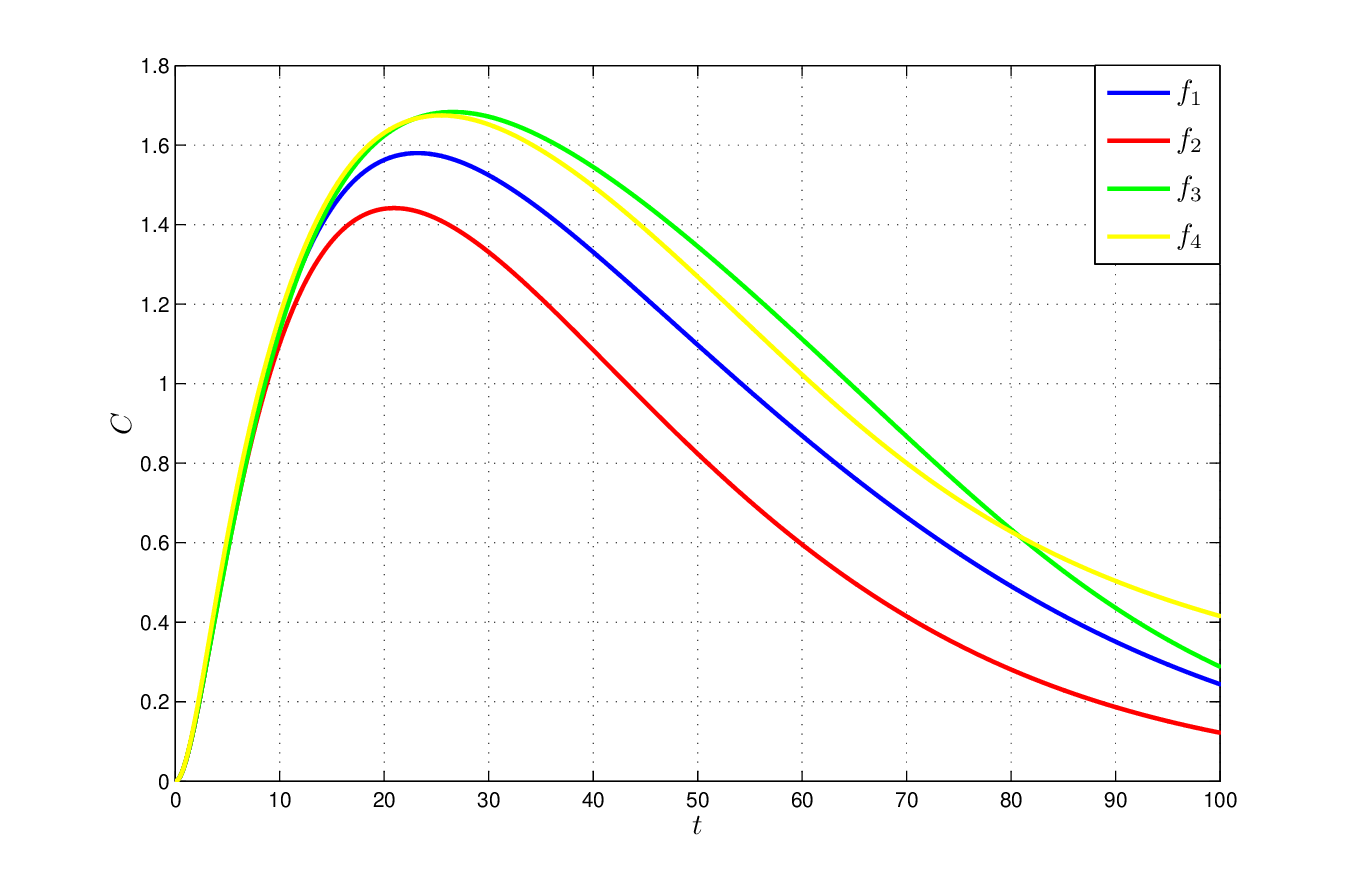}
\caption{The $C$ components corresponding to the functions $f_1, f_2, f_3, f_4$ in Table \ref{Table1}.}\label{Fig:8}
\end{figure}
{
\subsection{Dynamics of the discrete-time model}\label{Subsec3.2}
In this subsection, we examine the dynamics of the discrete-time model \eqref{eq:DE1}. For this purpose, we consider it with the parameters
\begin{equation*}
a = 0.08, \quad b = 0.25,
\end{equation*}
and the function $f(C)$ is given in \eqref{eq:3}, namely:
\begin{equation*}
f(C) = \dfrac{\kappa_1 C}{1 + \kappa_2 C^2}.
\end{equation*}
For this function, we have $g(C)  = \frac{\kappa_1}{1 + \kappa_2 C^2}$ and then,
\begin{equation*}
m = \min_{0 \leq C \leq S_0}g(C) = \dfrac{\kappa_1}{1 + \kappa_2 S_0^2}, \quad M = \max_{0 \leq C \leq S_0}g(C) = {\kappa_1}.
\end{equation*}
In numerical examples reported below, we take  $\kappa_1 = 0.5$ and $\kappa_2 = 1.0$. The solutions of the discrete-time model \eqref{eq:DE1} with some specific initial data and $\Delta t = 1$ are depicted in Figures \ref{Fig:9}-\ref{Fig:12}. It is clear that all the solutions are positive, stable and convergent to the trivial equilibrium point. Thus, the theoretical assertions constructed in Section \ref{Sec3} are illustrated and supported.
\begin{figure}[H]
\centering
\includegraphics[height=9.5cm,width=16cm]{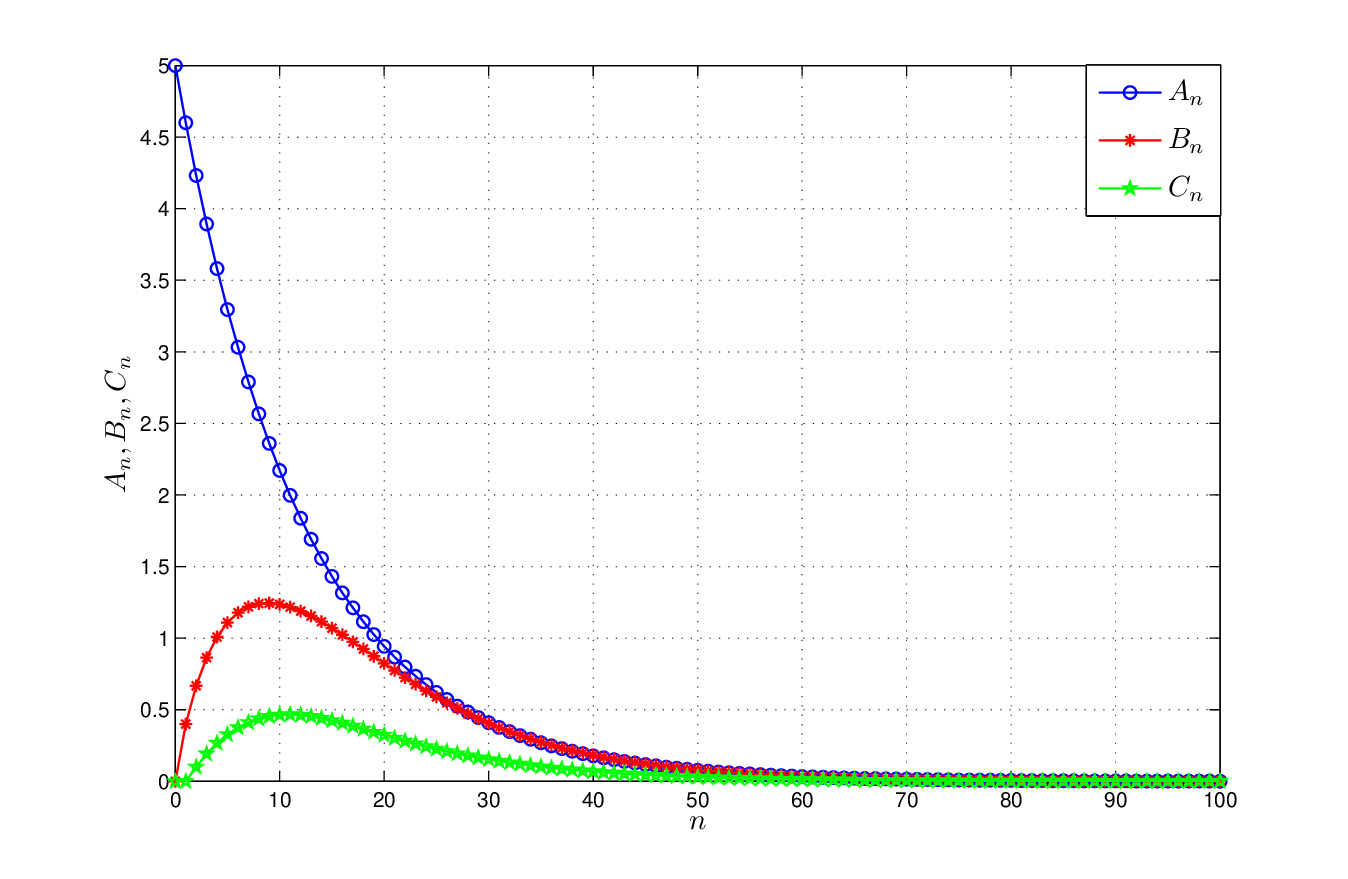}
\caption{The solutions of the discrete-time model with the initial data $\big(A_0,\,B_0,\,C_0\big) = (5,\,0,\,0)$.}\label{Fig:9}
\end{figure}

\begin{figure}[H]
\centering
\includegraphics[height=9.5cm,width=16cm]{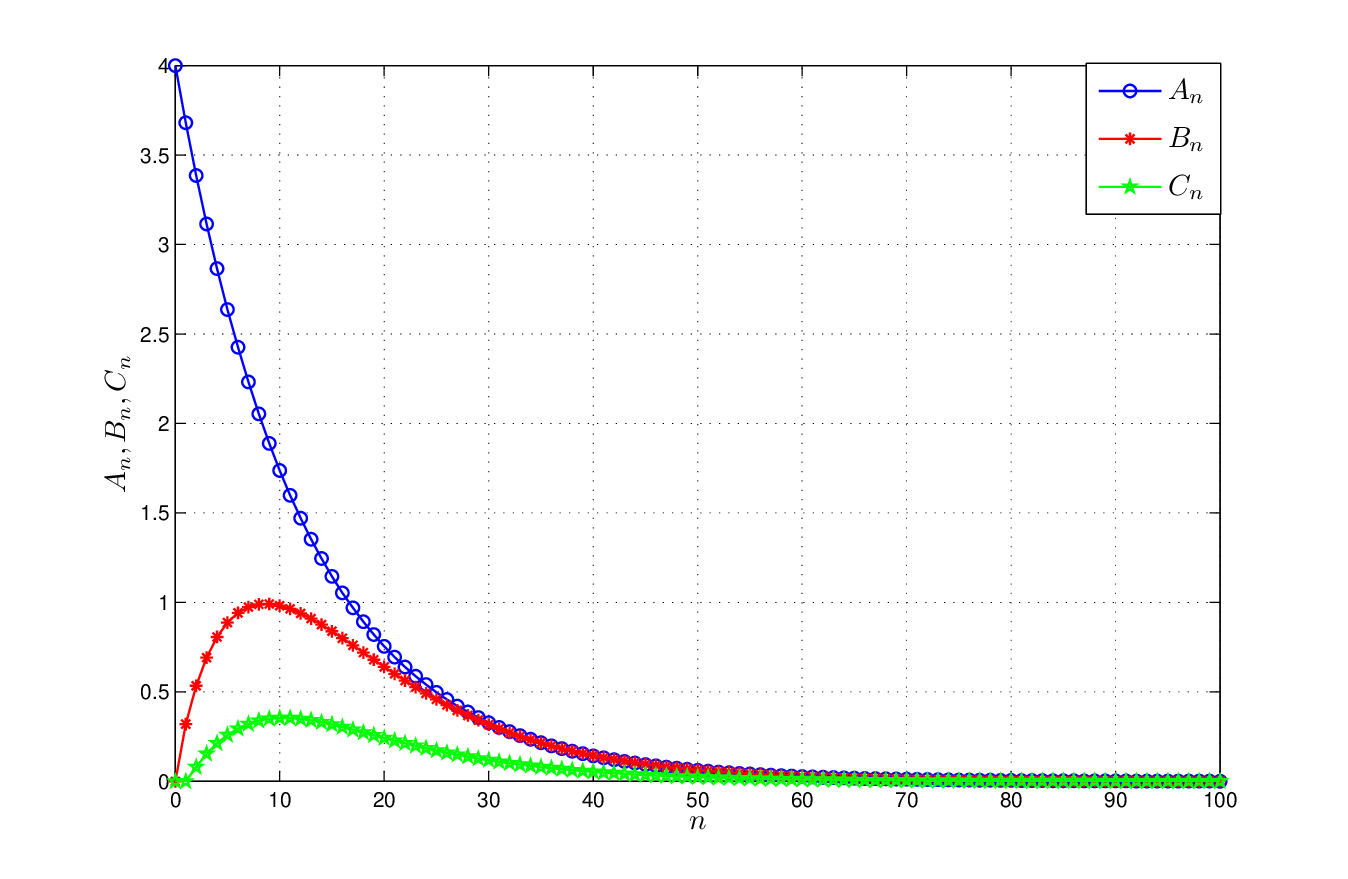}
\caption{The solutions of the discrete-time model with the initial data $\big(A_0,\,B_0,\,C_0\big) = (4,\,0,\,0)$.}\label{Fig:10}
\end{figure}

\begin{figure}[H]
\centering
\includegraphics[height=9.5cm,width=16cm]{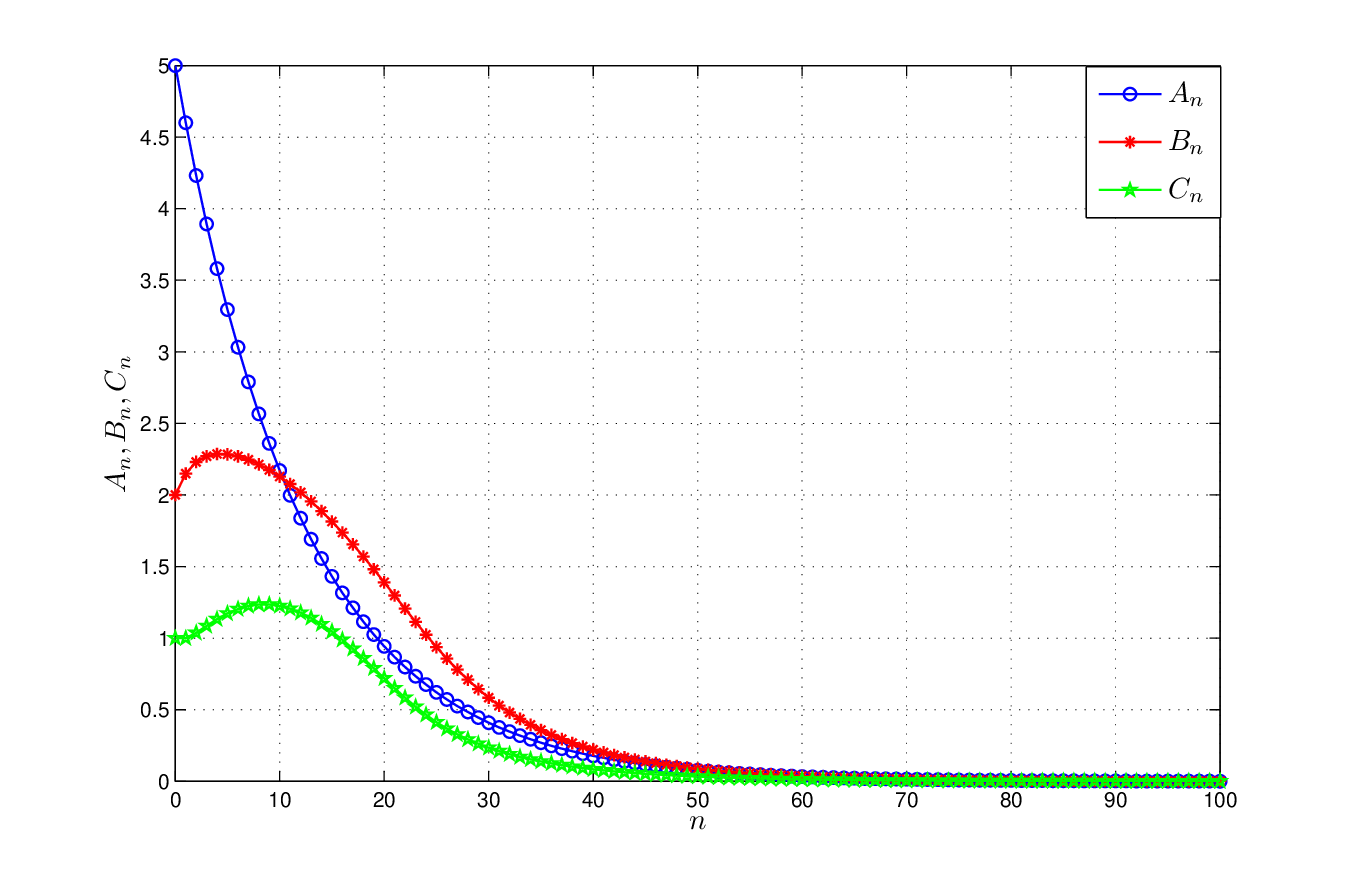}
\caption{The solutions of the discrete-time model with the initial data $\big(A_0,\,B_0,\,C_0\big) = (5,\,2,\,1)$.}\label{Fig:11}
\end{figure}

\begin{figure}[H]
\centering
\includegraphics[height=9.5cm,width=16cm]{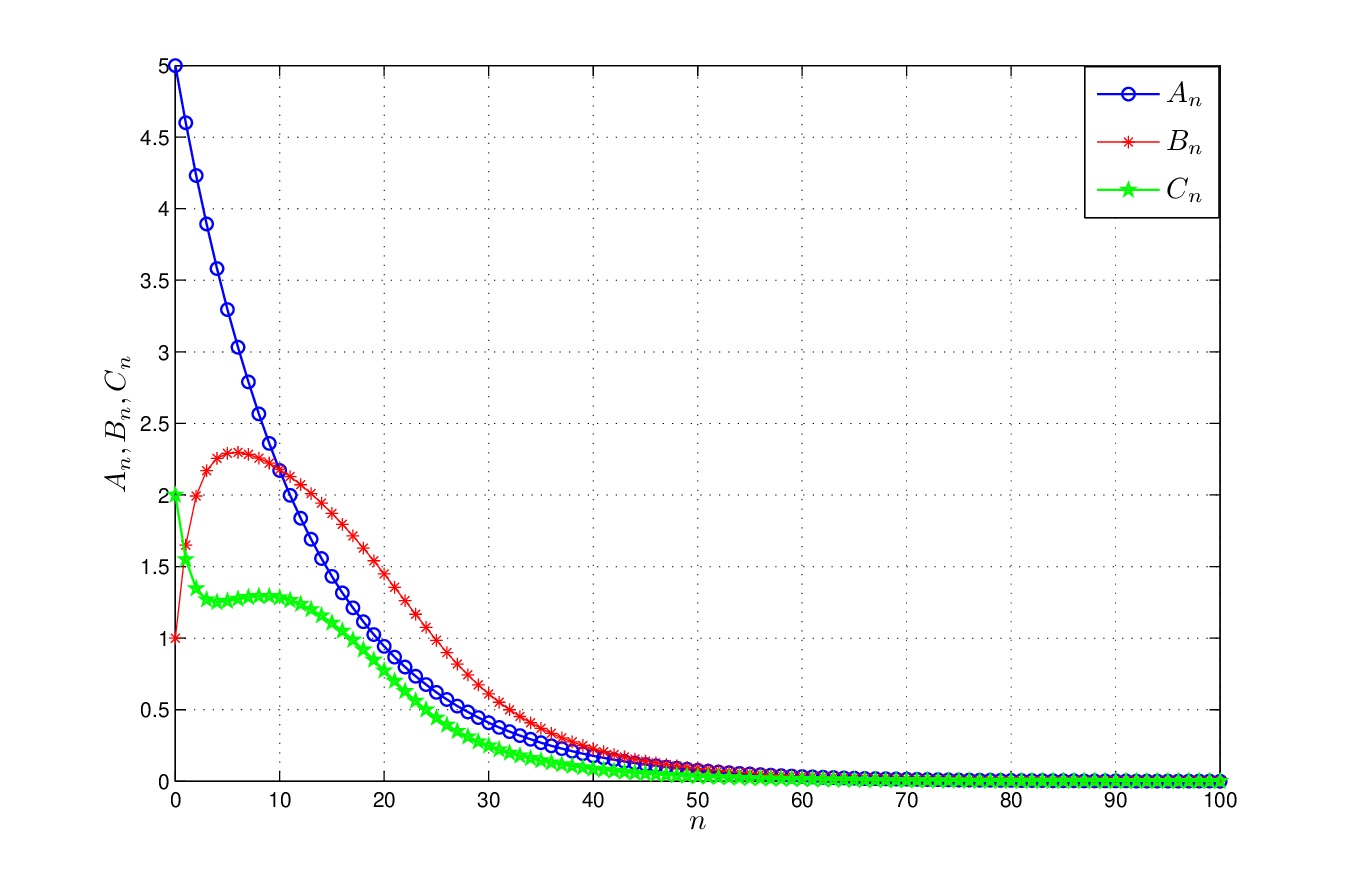}
\caption{The solutions of the discrete-time model with the initial data $\big(A_0,\,B_0,\,C_0\big) = (5,\,1,\,2)$.}\label{Fig:12}
\end{figure}

}

\section{Concluding Remarks and Discussions}\label{Sec4}
As the main conclusion of this work, we have introduced a generalized three-compartment system modeling ethanol metabolism in the human body, which extends a model recently constructed in \cite{Wacker1}. This model replaced the Michaelis-Menten mechanism of the liver's ethanol metabolism rate by a general class of nonlinear rate functions.  This provides greater modeling flexibility and allows the model to capture a wider variety of hepatic ethanol metabolism dynamics. The qualitative dynamics of the proposed ethanol metabolism model has been analyzed rigorously, including the positivity and boundedness of the solutions and the global asymptotic stability of the unique equilibrium point by using a quadratic Lyapunov function.

Second, we have formulated a discrete-time counterpart of the proposed continuous-time model and investigated its dynamical properties. We have shown that, under an appropriate condition on the time step size, the discrete-time model faithfully reproduces the qualitative dynamical behavior of the corresponding continuous-time system.

 Furthermore, a series of numerical experiments employing several ethanol metabolism rate functions has been conducted to support the theoretical findings.

In the near future, we plan to apply the proposed model to real-world situations. In particular, parameter estimation and identification problems will be of interest.\\
\textbf{Availability of supporting data:} The data supporting the findings of this study are available within the article [and/or] its supplementary materials.\\
\textbf{Conflicts of Interest:} The author declares no conflicts of interest to disclose.\\
\textbf{Authors' contributions:} \textbf{Manh Tuan Hoang and Thi Kim Quy Ngo:} 
Writing review \& editing, Writing original draft, Visualization, Validation, Supervision, Software, Resources, Project administration, Methodology, Investigation,
Formal analysis, Data curation, Conceptualization, Funding acquisition.\\
\textbf{Benjamin Wacker:} 
Writing review \& editing, Visualization, Validation, Software, Resources, Project administration, Methodology, Investigation,
Formal analysis, Data curation, Conceptualization, Funding acquisition.\\
\textbf{Funding information:} Not available.


\bibliographystyle{amsalpha}

\end{document}